\documentclass[11pt,a4paper]{amsart}
\usepackage[english]{babel}
\usepackage{amssymb,amsmath,amsthm}
\usepackage{amsfonts}
\usepackage{graphicx}
\usepackage{float}

\usepackage{verbatim}
\usepackage{hyperref}

\usepackage[normalem]{ulem}
\usepackage[font=small,labelfont=bf]{caption}
\newtheorem{theorem}{Theorem}
\newtheorem{lemma}[theorem]{Lemma}

\newtheorem{proposition}[theorem]{Proposition} 
\theoremstyle{definition} 
 
\newtheorem{remark}[theorem]{Remark}
\newtheorem{example}[theorem]{Example}

\newtheorem*{theorem*}{Theorem}
\newtheorem{theoremA}{Theorem}
\newtheorem*{theoremA*}{Theorem A}

\usepackage{xcolor} 
\newcommand{\ro}[1]{ \textcolor{blue}{#1} \normalsize}

\title[Limit cycles of linear piecewise ODEs in the cylinder]{Finitude of limit cycles of linear piecewise ODEs in the cylinder}

\author{J.L. Bravo}
\address{Departamento de Matematicas, Universidad de Extremadura, 06006 Badajoz, Spain}
\email{trinidad@unex.es}
\author{R. Trinidad-Forte}
\address{Departamento de Ciencias Matemáticas e Informática, Universitat de les
Illes Balears, 07122 Palma, Spain}
\email{roberto.trinidad@uib.cat}

\thanks{The authors are partially supported by the project PID2023-151974NB-I00 funded by MICIU/AEI/10.13039/501100011033/FEDER, UE, and by ‘ERDF A way of making Europe’. This work has been partially funded by the Junta de Extremadura and by ‘ERDF A way of making Europe’ through
project GR24042 (J.L.B.).}
\subjclass[2010]{34C25}
\keywords{Periodic solution; Limit Cycle; Linear Piecewise ODE}

\begin{document}

\begin{abstract}
Let $x'=S(t,x)$ be a differential equation in the cylinder, linear piecewise in $x$ and with trigonometric coefficients in $t$. In this paper, we provide an upper bound on the number of limit cycles in terms of the number of regions of the piecewise equation and the degree of the coefficients, that is, an analogue of Hilbert's 16th problem in this context. 
\end{abstract}

\maketitle

\section{Motivations and main result}

The second part of Hilbert's sixteenth problem asks to obtain an upper bound $\mathcal{H}(n)$ of the number of limit cycles of polynomial planar vector fields in terms of their degree $n$. The question is extremely challenging and remains unsolved to this day. Substantial effort has been devoted to particular families of planar differential equations, such as quadratic systems, cubic systems, Liénard-type equations, Abel equations, among others. We refer to \cite{Gasull2,Ilyashenko,Jibin} and the references therein for a deeper exposition on the study of the limit cycles of planar polynomial ODEs.

A particular version of Hilbert's sixteenth problem can be stated in one-dimensional non-autonomous periodic differential equations, that is, families of equations of the form 
\begin{equation}\label{eq:0}
x' = S(t,x),\quad (t,x)\in\mathbb{R}^2,
\end{equation}
where $S$ is $2\pi$-periodic in $t$. 
In this context, assuming existence and uniqueness of solutions, a solution $x$ is periodic if and only if $x(0)=x(2\pi)$ (see, e.g.,~\cite{Forte}). A {\em limit cycle} is defined as a periodic solution that is isolated within the set of all periodic solutions of \eqref{eq:0}. 
In this context, Hilbert's sixteenth problem is to bound the number of limit cycles of ~\eqref{eq:0} in terms of certain degrees of the family that will be defined later.
These systems have been profusely studied due to their relation with related problems for higher-dimensional piecewise systems~\cite{JL3zonas,Buzzi,  
Carmona}, planar autonomous polynomial differential equations~\cite{Conti,Devlin, Prohens, Lloyd}, and also for its application to problems in control theory~\cite{Carmona, Fossas} or to other real-world models~\cite{Benardete, Harko}.

The most studied version is Hilbert's sixteenth problem when $S$ is polynomial in $x$, known as Smale-Pugh's problem~\cite{Smale}. It is known that the bound can not depend only on the degree as a polynomial in $x$, as Lins-Neto in \cite{Neto} proved that the number of limit cycles for Abel equations with trigonometric polynomial coefficients is not bounded. Therefore, to obtain a uniform upper bound, it is necessary to consider also the degree of the coefficients. Solving the  Hilbert's sixteenth problem in this version would solve classical Hilbert sixteenth problem for quadratic vector fields, among others (see e.g. \cite{Coppel1966,Devlin}).

The Hilbert number of~\eqref{eq:0}, $\mathcal{H}(n,m)$, is the maximum number of limit cycles that~\eqref{eq:0} has for fixed $n,m$, where $n$ is the degree of $S$ as a polynomial in $x$ and $m$ is the maximum of the degree of the coefficients. Hilbert's sixteenth problem can be defined as computing $\mathcal{H}(n,m)$ or at least providing an upper bound.

Hilbert numbers has only been obtained for certain families (see e.g.\cite{Andersen, GasullGuillamon, GasullLlibre,HuangTianZhao, Neto, Panov,YuHuangLiu}). Moreover, it is not even known if the number of limit cycles for a particular equation is finite, except when the leading coefficient is constant~\cite{Ilyashenko2000}. We refer the reader to \cite{Gasull} more details and references. 

Another case that is recently gathering considerable attention is when $S$ is piecewise-linear in $x$, and the coefficients of the linear functions are trigonometric monomials in $t$ (see e.g. \cite{Ojeda,Forte,Coll,Gasull,HuangJin2019,Huang,Tian}). These equations have interest by themselves, but also appear 
studying certain families of linear-piecewise systems in higher dimensions (see e.g. \cite{JL3zonas,Carmona,Huang}).

To obtain an analogue to Hilbert's sixteenth problem in this setting, we need to give a notion of degree with respect to $x$. As $S$ is piecewise-linear in $x$, a good candidate is the number of linear pieces. Again, there is no upper bound on the number of limit cycles just in terms of the number of linear pieces; in \cite{Coll} it is proved that, given any natural number $k \geq 2$, for $\varepsilon$ sufficiently small, the differential equation
\[
x' = 2\pi \sin (2\pi t) + \varepsilon \cos (2k \pi t)|x|
\]
has at least $k - 2$ limit cycles. Therefore, the upper bound should be obtained in terms of the number of regions and the degree of the trigonometric coefficients.

We define the Hilbert numbers of~\eqref{eq:0}, denoted by $\mathcal{H}(n,m)$, as the maximum number of limit cycles that equation~\eqref{eq:0}, with $n+1$ regions and trigonometric coefficients of degree $m$, can exhibit. In this context, Hilbert’s sixteenth problem consists in finding an upper bound for the number of limit cycles of~\eqref{eq:0} in terms of the number of regions and the degree of the trigonometric coefficients.

So far, Hilbert numbers have been determined only for certain specific families. For instance, if~\eqref{eq:0} is of the form
\[
x' = a(t)|x| + b(t),
\]
and either $a$ or $b$ has constant sign, then $\mathcal{H}=2$~\cite{Gasull}. Moreover, when both $a$ and $b$ are trigonometric polynomials of degree one, it has been shown that $\mathcal{H}\leq 23$~\cite{Ojeda}.

In~\cite{Carmona}, the authors prove that continuous, observable, and non-controllable three-dimensional symmetric piecewise linear systems with two or three regions can be reduced to equation~\eqref{eq:0} with
\[
S(t,x)=F(x)+\mu\sin(t),
\]
where $F$ is a piecewise-linear function with two or three pieces, respectively. They show that in the two-region case, the maximum number of limit cycles is two. For the three-region case, the system is known to admit at least five limit cycles, and the total number is proved to be finite~\cite{JL3zonas}. In particular, $\mathcal{H}(2,1)\geq 5$.

Finally, by~\cite{Coll}, it is known that $\mathcal{H}(1,m)\geq m-2$. This bound was improved in~\cite{HuangJin2019} to $\mathcal{H}(1,m)\geq m+2$ when $S$ is continuous, and to $\mathcal{H}(1,m)\geq m+4$ in the discontinuous case.

This problem can also be extended to the case where $S$ is piecewise polynomial in $x$, for which some partial results are available~\cite{Huang,Tian}. However, the general setting would include the Abel equation as a particular case, and for this reason, it exceeds the aims of the present work.

The objective of this work is to obtain an upper bound on the Hilbert numbers associated with~\eqref{eq:0} when $S$ is piecewise linear in $x$ and has trigonometric polynomial coefficients in $t$. More precisely, equation~\eqref{eq:0} is assumed to be of the form
\begin{equation}\label{eq:mainPCW}
x' = S(x,t) =
\begin{cases}
a_{n+1}(t)\,x + b_{n+1}(t), & x \ge x_n,\\[1mm]
a_i(t)\,x + b_i(t), & x_{i-1} \le x \le x_i,\quad i=2,\dots,n,\\[1mm]
a_1(t)\,x + b_1(t), & x \le x_1,
\end{cases}
\end{equation}
where $a_1,\dots,a_{n+1}$ and $b_1,\dots,b_{n+1}$ belong to $\mathbb{R}[\sin(t),\cos(t)]$. In order to consider the most general setting, we do not assume that $S$ is continuous.

Throughout this work we restrict our attention to \emph{crossing solutions}, that is, absolutely continuous functions $u(t)$ satisfying
\begin{enumerate}
\item if $u(t)\notin \{x_1,\ldots,x_n\}$, then $u'(t) = S(t,u(t))$;
\item if $u(t)=x_k$ for some $k\in\{1,\ldots,n\}$, then
\[
\bigl(a_k(t)x_k + b_k(t)\bigr)\bigl(a_{k+1}(t)x_k + b_{k+1}(t)\bigr) > 0.
\]
\end{enumerate}
A \emph{crossing limit cycle} is defined as a periodic crossing solution that is isolated within the set of periodic crossing solutions. We will show that, when $S$ is continuous, any upper bound on the number of crossing limit cycles also provides an upper bound on the number of limit cycles of~\eqref{eq:0}.

The same techniques could be applied when considering solutions in the sense of Filippov~\cite{Filippov}. However, due to the possible lack of uniqueness, a precise notion of limit cycle would be required, and this situation is not addressed here.

Let \[M = \max \{\deg(a_i), \deg(b_i)\}_{i=1}^{n+1}.\] 
The main result of this paper provides an explicit upper bound for the number of crossing limit cycles of equation~\eqref{eq:mainPCW} in terms of $n,M$.

\begin{theoremA}\label{thm:A}

For fixed $n,M$, the maximum number of crossing limit cycles of \eqref{eq:mainPCW} satisfies 
    \[ \mathcal{H}(n,M) \leq
2Mn \left( 2^{4Mn(8Mn-1)}\,(6M^2n + 2Mn+1 )^{8Mn}\right) + 2.
\] 
\end{theoremA}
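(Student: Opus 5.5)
We will reduce the count of crossing limit cycles to counting isolated zeros of an explicit system of exponential–polynomial equations, and then bound those zeros with a Khovanskii-type fewnomial estimate. The bound is expected to have the shape $2^{k(k-1)/2}(\ldots)^k$ with $k=8Mn$, which fits the stated $2^{4Mn(8Mn-1)}(6M^2n+2Mn+1)^{8Mn}$.

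\textbf{Step 1: encode a crossing periodic solution by its crossing data.} Let $u$ be a periodic crossing solution. Since $u$ crosses every switching line $x=x_k$ transversally, it meets the lines at isolated times $0\le t_1<\dots<t_r<2\pi$. Between two consecutive crossings, $u$ solves a linear equation $x'=a_i(t)x+b_i(t)$, so
\[
u(t)=e^{A_i(t)-A_i(s)}\Bigl(u(s)+\int_s^t e^{-A_i(\tau)+A_i(s)}b_i(\tau)\,d\tau\Bigr),\qquad A_i'=a_i .
\]
At a crossing time $t_j$ the sign condition $(a_kx_k+b_k)(a_{k+1}x_k+b_{k+1})>0$ holds. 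Hence $t_j$ is a point where $a_kx_k+b_k$ and $a_{k+1}x_k+b_{k+1}$ have the same sign, and the solution passes from one region to the next.

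\textbf{Step 2: bound the number of crossings.} A crossing from region $k$ into region $k+1$ and the next crossing back through $x=x_k$ must occur with opposite directions of motion. Between them, $a_kx_k+b_k$ or $a_{k+1}x_k+b_{k+1}$ must therefore change sign. Each of these is a trigonometric polynomial of degree at most $M$, so it has at most $2M$ zeros in $[0,2\pi)$. This shows that each line is crossed at most $2M$ times per period, and so $r\le 2Mn$. I expect this step to be the source of the factor $2Mn$ in the final bound. It counts the possible combinatorial itineraries (sequences of regions and crossing times) up to a factor depending only on $n,M$.

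\textbf{Step 3: periodic orbits as solutions of a system.} Fix an itinerary. A periodic solution following it is determined by the crossing times $t_1,\dots,t_r$, which satisfy the matching conditions
\[
x_{k_{j+1}}=e^{A(t_{j+1})-A(t_j)}x_{k_j}+\int_{t_j}^{t_{j+1}}e^{A(t_{j+1})-A(\tau)}b(\tau)\,d\tau .
\]
Working on $[0,2\pi]$, substitute $z=e^{it}$. The primitives $A_i$ are then trigonometric polynomials plus a linear term $c_it$. Introducing auxiliary variables for the exponentials and for the integrals, each determined by a polynomial ODE or a Pfaffian relation, turns the system into a Pfaffian system in at most $k=8Mn$ variables: roughly four per crossing, namely $\cos t_j,\sin t_j$, an exponential, and an integral value. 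All polynomial degrees are at most $O(M)$.

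Isolated periodic crossing solutions correspond to nondegenerate or at least isolated solutions of this system. Non-isolated families must be excluded, and I would treat them by perturbation or by observing that isolated zeros of a Pfaffian system are still counted by Khovanskii's bound.

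\textbf{Step 4: apply Khovanskii's theorem and add the exceptional orbits.} Khovanskii's theorem for a Pfaffian chain of length $k$ with polynomial degrees $d$ and chain degree $\alpha$ gives at most $2^{k(k-1)/2}d(\alpha+d)^k$-type bounds. With $k=8Mn$ and $\alpha+d\le 6M^2n+2Mn+1$, this yields the displayed product. Multiplying by the $2Mn$ coming from the itineraries, and adding $2$ for the periodic solutions that never cross any switching line, completes the bound. There are at most two such solutions, since a linear equation restricted to the unbounded top or bottom region has at most one isolated periodic orbit.

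\textbf{Main obstacle.} The hardest step is Step 3. The integral terms $\int e^{-A}b$ must be written as a genuine Pfaffian chain of controlled length and degree. I would first attempt this by taking the chain $e^{\pm A_i(t)}$, $\cos t$, $\sin t$, and the integrals as Pfaffian functions in the $t_j$. I would then carefully track the degrees to reach the exact constant $6M^2n+2Mn+1$.
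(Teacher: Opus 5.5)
Your outline has the same architecture as the paper's proof: a crossing bound, a fixed itinerary, matching conditions between consecutive crossing times, Khovanski\u{\i}'s bound, multiplication by the number of itineraries, and $+2$. The main gap is that the factor $2Mn$ you multiply by in Step~4 is never justified. Step~2 bounds the \emph{length} of an itinerary by $2Mn$. It does not bound the \emph{number} of distinct itineraries. A priori the admissible ones (closed walks among the $n+1$ regions crossing each line at most $2M$ times) are exponentially many in $M$, already for $n=2$. The matching system changes with the itinerary (different pieces $a_i,b_i$ and different levels $x_{i_j}$), so you cannot pad all cycles into one system as in the two-region case. Your argument would therefore carry this combinatorial count as the multiplier, and the stated constant would not follow.

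The paper handles this with a separate argument (Proposition~\ref{prop:2}):
\begin{enumerate}
\item Transversal solutions on $[0,2\pi]$ with a fixed sequence $X$ form an open set of initial conditions (Lemma~\ref{lemma:3}).
\item Each connected component is a bounded interval whose endpoints are non-transversal. So each endpoint is tangent to a splitting line at a zero of $a_ix_i+b_i$ or $a_{i+1}x_i+b_{i+1}$, of which there are at most $4M$ per line.
\item Each such zero bounds at most one component. Hence there are at most $4Mn/2=2Mn$ components, and so at most $2Mn$ sequences.
\end{enumerate}

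Step~3, which you leave open, also does not work as sketched.
\begin{itemize}
\item The pair $\cos t_j,\sin t_j$ is not a Pfaffian chain, because it is not triangular ($d\cos t=-\sin t\,dt$, $d\sin t=\cos t\,dt$). So Theorem~\ref{Khov} does not apply to it. The paper instead uses $\tan(t_j/2)$ and $(1+\tan^2(t_j/2))^{-1}$ on $(-\pi,\pi)$. In terms of these, $\cos(lt)$ and $\sin(lt)$ are polynomials of degree at most $3l$, so $e^{-\int_0^{t_j}a}$ extends the chain with $\alpha=3M+1$.
\item The constant $6M^2n+2Mn+1=2Mn(3M+1)+1$ comes from three choices: only the $k\le 2Mn$ crossing times are unknowns, the chain has length $r=4k\le 8Mn$, and $\beta_j=1$. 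The last requires normalizing each matching condition by $e^{-\int_0^{t_{j+1}}a}$, so that it reads $\mathcal{Q}_j=0$ and is linear in the chain. Your version (factors $e^{A(t_{j+1})-A(t_j)}$, chain functions $e^{\pm A_i}$, a system in ``$8Mn$ variables'') gives different parameters, and you assert the constant rather than derive it.
\item Theorem~\ref{Khov} counts non-degenerate solutions, not merely isolated ones. The paper shows that a simple cycle gives a non-degenerate solution: the Jacobian of the matching system is cyclic bidiagonal with $\det J=c_1\cdots c_k\,d'(x)$, and $c_j\neq0$ by the crossing condition. It then uses the rotated family $x'=S+\lambda$ to make all cycles simple without decreasing their number.
\end{itemize}
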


The paper is organised as follows. In Section~2, we introduce Khovanskiĭ theory, which constitutes the main tool used to obtain the upper bounds. In Section~3, we analyse a simpler version of equation~\eqref{eq:mainPCW} with only two regions. This analysis allows us to highlight the main difficulties arising in the study of such differential equations, while at the same time providing a sharper bound for $\mathcal{H}(1,m)$. Finally, in Section~4, we prove the main result by extending the techniques developed in Section~3.

\section{Pfaffian functions and Khovanskiĭ theory}

This section gathers the necessary tools for studying the number of solutions of systems of equations involving transcendental functions. We introduce the theoretical framework required to apply Khovanskiĭ’s theorem, which provides bounds on the number of non-degenerate solutions of systems defined by Pfaffian functions.

Khovanskiĭ’s original theorem on fewnomials \cite{Khovanskiĭ} provides a bound on the number of isolated real solutions of systems of polynomial equations in terms of the number of distinct monomials, independently of their degrees. Nevertheless, when one considers systems like \eqref{eq:mainPCW}, where $a_1,\dots,a_{n+1},b_1,\dots,b_{n+1}$ are general trigonometric polynomials, the resulting functions derived from the formula of the solution of linear ODEs involve compositions such as exponentials of trigonometric terms. These go beyond the scope of the fewnomial theorem, which provides an upper bound for the number of solutions of a system of equations with trigonometric and exponential functions, but not a composition of such functions.

However, Khovanskiĭ extended his theory to Pfaffian functions: analytic functions whose derivatives satisfy a triangular system of differential equations with polynomial coefficients. These functions were first introduced by Khovanskiĭ (see \cite{Khovanskiĭ,Khovanskiĭ2}) and have later been studied by other authors (see \cite{Gabrielov,gabrielov1995}). Khovanskiĭ realized that many analytic transcendental functions satisfy systems of equations that have a controlled algebraic structure. 

By correctly defining the degree of the polynomials involved and the dimension of the algebraic structure, he provided an upper bound on the number of non-degenerate real solutions for a system of a broader type of functions (see Theorem \ref{Khov}).

The section aims to prove that fixed  $a,b\in\mathbb{R}[\sin\,t,\cos\,t]$ and $x_1,x_2\in\mathbb{R}$ the functions defined by $(t_1,t_2)\in (-\pi,\pi)^2 \to T(t_1,t_2)$, where
\begin{equation}\label{eq:transition}
T(t_1,t_2):=x_{1}e^{-\int_{0}^{t_1}a(s)ds}
+\int_{t_1}^{t_2}b(t)e^{-\int_{0}^{t}a(\tau)d\tau}dt
-x_{2}e^{-\int_{0}^{t_2}a(s)ds},
\end{equation} which are related to the times of the solution of the linear non-autonomous ODE $x'=a x + b$ passing through two given values, are Pfaffian, so Kovanskiĭ's results apply. 
\medskip

The following definitions and results are extracted from~\cite{Gabrielov}. 
\medskip

A \textit{Pfaffian chain} of order $r \geq 0$ and degree $\alpha \geq 1$ in an open domain $G \subset \mathbb{R}^n$ is a sequence of analytic functions $f_1, \ldots, f_r$ in $G$ satisfying a chain of differential equations
\begin{equation*}
df_j(x) = \sum_{i=1}^{n} g_{ij}(x, f_1(x), \ldots, f_j(x)) \, dx_i \quad \text{for } 1 \leq j \leq r,
\end{equation*}
where $g_{ij}(x, y_1, \ldots, y_j)$ are polynomials in $x = (x_1, \ldots, x_n), y_1, \ldots, y_j$ of degrees not exceeding $\alpha$.

A function $f(x) = P(x, f_1(x), \ldots, f_r(x))$, where $P(x, y_1, \ldots, y_r)$ is a polynomial of degree not exceeding $\beta \geq 1$, is called a \textit{Pfaffian function} of order $r$ and degree $(\alpha, \beta)$.    
\medskip

Examples of Pfaffian functions are polynomials, rational functions, exponentials or trigonometric functions (we refer again to~\cite{Gabrielov} for more details).

\begin{enumerate}
\item Polynomials of degree $n$ are Pfaffian functions of order $0$ and degree $(1,n)$.

\item The function $f(x) = 1/x$ is a Pfaffian function of order $1$ and degree $(2,1)$ due to the chain  $f'(x) = -f^2(x)$.

\item The exponential function $f(x) = \exp(x)$ is a Pfaffian function of order $1$ and degree $(1,1)$, since $f'(x) = f(x)$.

\item The functions $f_1(x) = \sin(x)$ and $f_2(x) = \cos(x)$ are Pfaffian functions of order $2$ and degrees $(2,2)$ and $(2,3)$ respectively for $x\in(-\pi,\pi)$: If we take $f(x) = \tan(x/2)$ and $g(x) = 1 / (1 + f^2(x))$, then 

\begin{equation*}
\begin{split}
&f'(x) = \frac{1+f^2(x)}{2},\quad g'(x) = -\dfrac{2f(x)f'(x)}{(1+f^2(x))^2} = -f(x)g(x), \\ 
&f_1'(x) = \dfrac{1 - f^2(x)}{1+f^2(x)} = \left(1-f^2(x)\right)g(x),\quad  f_2'(x) = -\dfrac{2f(x)}{1+f^2(x)} =  -2f(x)g(x).
\end{split}
\end{equation*}
\end{enumerate}

The next result proves that Pfaffian functions possess a ring structure and shows how the order and degree are affected by the sum and product of Pfaffian functions.

\begin{lemma}[\cite{Gabrielov}]
The sum (resp. product) of two Pfaffian functions $f_1$ and $f_2$ of orders $r_1$ and $r_2$ and degrees $(\alpha_1, \beta_1)$ and $(\alpha_2, \beta_2)$ respectively, is a Pfaffian function of order $r_1 + r_2$ and degree $(\alpha, \max\{\beta_1, \beta_2\})$ (resp. $(\alpha, \beta_1 + \beta_2)$), where $\alpha = \max\{\alpha_1, \alpha_2\}$. If the two functions are defined by the same Pfaffian chain of order $r$, then the orders of the sum and of the product are both equal to $r$.    
\end{lemma}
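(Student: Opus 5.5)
The plan is to build the Pfaffian chain for the sum directly from the two given chains. Suppose $f_1 = P_1(x, u_1,\dots,u_{r_1})$, where $u_1,\dots,u_{r_1}$ is a Pfaffian chain of degree $\alpha_1$ and $P_1$ has degree at most $\beta_1$. Likewise, suppose $f_2 = P_2(x, v_1,\dots,v_{r_2})$, where $v_1,\dots,v_{r_2}$ is a chain of degree $\alpha_2$ and $\deg P_2\le \beta_2$, with both chains defined on the same domain $G$. I would take the concatenated sequence
\[
(w_1,\dots,w_{r_1+r_2}) := (u_1,\dots,u_{r_1},v_1,\dots,v_{r_2}).
\]

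The first step is to check that this sequence is a Pfaffian chain of order $r_1+r_2$ and degree $\alpha=\max\{\alpha_1,\alpha_2\}$. For $j\le r_1$, the equation $dw_j=\sum_i g_{ij}(x,w_1,\dots,w_j)\,dx_i$ is just the original equation for $u_j$. For $j=r_1+k$, the original equation expresses $dv_k$ through polynomials in $x,v_1,\dots,v_k$. These are polynomials in $x,w_1,\dots,w_j$ that simply do not involve the variables $w_1,\dots,w_{r_1}$, so the triangular structure is preserved. Adding variables does not increase the degree of a polynomial, so every coefficient polynomial has degree at most $\alpha$.

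The second step is to write $f_1+f_2$ and $f_1f_2$ as polynomials in $x,w_1,\dots,w_{r_1+r_2}$. The sum $P_1+P_2$ has degree at most $\max\{\beta_1,\beta_2\}$, and the product $P_1P_2$ has degree at most $\beta_1+\beta_2$, which gives the claimed degrees. In the last case, where both functions are defined by the same chain $f_1,\dots,f_r$, no concatenation is needed. We take $P_1+P_2$ and $P_1P_2$ as polynomials in $x,f_1,\dots,f_r$, so the order remains $r$ and the degree bounds are the same. Analyticity on $G$ is automatic, since sums and products of analytic functions are analytic.

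There is no real obstacle here; the argument is purely a matter of bookkeeping. The one point to state explicitly is the convention that the chain degree $\alpha$ bounds the degrees of all the $g_{ij}$, which is why $\alpha$ becomes the maximum of $\alpha_1$ and $\alpha_2$ rather than a sum. It is also worth noting that the chains must be defined on a common domain $G$.
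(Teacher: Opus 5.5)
Your proof is correct: concatenating the two chains gives a triangular chain of order $r_1+r_2$ and degree $\max\{\alpha_1,\alpha_2\}$, $P_1+P_2$ and $P_1P_2$ then have the stated degrees, and when the chain is shared no concatenation is needed. The paper gives no proof of its own and simply cites \cite{Gabrielov}, where the lemma is handled by this same chain-concatenation bookkeeping, so your route matches the standard argument.
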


The following theorem will be the key tool for bounding the number of crossing limit cycles, as it provides an upper bound on the number of real non-degenerate solutions of a system of equations in terms of the degree of the Pfaffian functions and the length of the Pfaffian chain.

\begin{theorem}[\cite{Khovanskiĭ}]\label{Khov}
    Consider a system of equations $f_1=\ldots=f_n = 0,$ where $f_i,\, 1\leq i \leq n$ are Pfaffian functions in a domain $G \subset \mathbb{R}^n$, having a common Pfaffian chain of order $r$ and degrees $(\alpha, \beta_i)$ respectively. Then the number of non-degenerate solutions of this system does not exceed 
    \begin{equation*}
        \mathcal{M}(n,r,\alpha,\beta_1,\ldots,\beta_n) := 2^{r(r-1)/2}\beta_1\dots \beta_n (\min \{n,r\}\alpha + \beta_1 + \dots + \beta_n - n + 1)^r.
\end{equation*}   
\end{theorem}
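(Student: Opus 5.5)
This is Khovanskiĭ's bound, which the paper quotes from \cite{Khovanskiĭ}. If I were to reprove it, I would use induction on the order $r$ of the common Pfaffian chain, with the Khovanskiĭ--Rolle theorem for separating solutions of Pfaff equations as the engine.

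\emph{Base case.} For $r=0$ all the $f_i$ are polynomials of degrees $\beta_i$. Bézout's theorem bounds the number of non-degenerate solutions by $\beta_1\cdots\beta_n$. This equals $\mathcal{M}(n,0,\alpha,\beta_1,\ldots,\beta_n)$, since the factors $2^{0}$ and $(\cdot)^0$ are both $1$.

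\emph{Inductive step: lift to one more variable.} Assume the bound for chains of order $r-1$.
\begin{itemize}
\item Write $f_i(x)=P_i(x,f_1(x),\ldots,f_r(x))$ and replace the last chain function $f_r$ by a new variable $y$. This gives polynomials-in-chain $F_i(x,y)$ on $G\times\mathbb{R}$, all built on the chain $f_1,\ldots,f_{r-1}$ of order $r-1$.
\item The solutions of the original system correspond bijectively to solutions of $F_1=\cdots=F_n=0$ lying on the graph $\Gamma=\{y=f_r(x)\}$. Non-degeneracy is preserved under this correspondence.
\item $\Gamma$ is an integral manifold of the Pfaff form $\omega=dy-\sum_i g_{ir}(x,f_1,\ldots,f_{r-1},y)\,dx_i$.
\item $\Gamma$ is a \emph{separating} solution, because the graph of a function splits $G\times\mathbb{R}$ into two sides, and $\omega$ has a consistent coorientation along $\Gamma$.
\end{itemize}

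\emph{Khovanskiĭ--Rolle step.} After a small generic perturbation of the right-hand sides (which keeps non-degenerate solutions and makes the relevant sets smooth), the set $C=\{F_1=\cdots=F_n=0\}\subset G\times\mathbb{R}$ is a smooth curve. Between two consecutive intersection points of a component of $C$ with $\Gamma$, the form $\omega$ restricted to $C$ must vanish. Hence
\[
\#(C\cap\Gamma)\le \#\{\text{noncompact components of } C\} + \#\{p\in C:\ \omega|_{T_pC}=0\}.
\]
The second set is cut out by adding one equation: the $(n+1)\times(n+1)$ determinant of $\omega$ and $dF_1,\ldots,dF_n$. By the rule for derivatives of Pfaffian functions, this determinant is Pfaffian on the same chain of order $r-1$, with degree at most $\sum_i(\beta_i+\alpha-1)+\alpha$. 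So we get a system of $n+1$ equations in $n+1$ variables of order $r-1$, and the induction hypothesis applies.

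\emph{Noncompact components and bookkeeping.} The noncompact components are bounded by intersecting $C$ with a generic large sphere or hyperplane, or by using Khovanskiĭ's compactification trick: count the endpoints of $C$ on the boundary via a further system of the same type. This term is also bounded by induction and contributes at most a comparable quantity. Adding the two contributions doubles the count, which produces the factor $2^{r-1}$ at each step and hence $2^{r(r-1)/2}$ overall. The growth of the degree of the determinant produces the factor $(\min\{n,r\}\alpha+\sum\beta_i-n+1)$ per step, and hence its $r$-th power. The $\min\{n,r\}$ appears because only $\min\{n,r\}$ of the derivative columns can actually carry the chain degree $\alpha$.

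\emph{Main obstacle.} I expect the hard part to be the rigorous control of noncompact components of $C$ inside a possibly non-compact $G$, together with the genericity and perturbation argument that makes $C$ smooth without losing solutions. Tracking the degrees to land exactly on the constant $\mathcal{M}$ is routine but delicate. I would handle the compactness issue first, with a proper exhaustion function added as an extra Pfaffian equation, as in Khovanskiĭ's original treatment.
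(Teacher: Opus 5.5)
The paper gives no proof of this statement. It is quoted from Khovanski\u{\i} (in the form of Gabrielov--Vorobjov), so citing it is all the paper needs. Read as a reproof, your sketch has a genuine gap in the induction itself, not just in ``delicate bookkeeping''. Feeding the lifted system ($n+1$ equations on the chain $f_1,\dots,f_{r-1}$) back into the theorem cannot return $\mathcal{M}(n,r,\alpha,\beta_1,\dots,\beta_n)$.

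Take $n=1$, $r=3$, $\alpha=10$, $\beta_1=1$, so the target is $\mathcal{M}(1,3,10,1)=2^3\cdot 11^3=10648$. Your contact determinant contains the entry $g_{13}$ (for $F_1=y$ it equals $-g_{13}$), so its degree must be allowed to be $\alpha=10$; your own estimate gives $20$. The inductive hypothesis then yields $\mathcal{M}(2,2,10,1,10)=2\cdot 10\cdot 30^2=18000$ (resp.\ $64000$), before any noncompact component is counted. The loss has two sources:
\begin{itemize}
\item differentiating the $F_i$ through the remaining chain, where each $x$-derivative gains $\alpha-1$;
\item the recursive formula, which now carries $\min\{n+1,r-1\}\alpha$ and a degree sum containing $\deg D$.
\end{itemize}
Your account of $2^{r(r-1)/2}$ is also not right. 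Adding the noncompact count at most doubles the bound once per step, which would give $2^r$.

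Khovanski\u{\i}'s route is different. He lifts \emph{all} chain functions to coordinates $y_1,\dots,y_r$ of $\mathbb{R}^{n+r}$. The $P_i$ are then honest polynomials, and the graph of the chain is the last member of a flag $\Gamma_0\supset\Gamma_1\supset\dots\supset\Gamma_r$, with $\Gamma_j$ a separating solution of $\omega_j=dy_j-\sum_i g_{ij}\,dx_i$ on $\Gamma_{j-1}$. Rolle is applied $r$ times, each time trading one Pfaff equation for the polynomial equation $D_k=\varepsilon_k$. Here $D_k=\det(\omega_1,\dots,\omega_{r-k+1},dP_1,\dots,dP_n,dD_1,\dots,dD_{k-1})$, and a small generic $\varepsilon_k$ makes the contact points non-degenerate. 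Your sketch omits this step, but any count of non-degenerate solutions needs it.
\begin{itemize}
\item Each $\omega_j$ has a single unit entry in the $y$-columns, so at most $\min\{n,r\}$ of the $\omega$-rows contribute degree $\alpha$ to a term of the expansion. Hence $\deg D_1\le\delta:=\sum_i(\beta_i-1)+\min\{n,r\}\alpha$.
\item $dD_1,\dots,dD_{k-1}$ are rows of $D_k$, so $\deg D_k\le 2^{k-1}\delta$.
\item B\'ezout gives $2^{r(r-1)/2}\beta_1\cdots\beta_n\,\delta^r$. This doubling of degrees is the source of $2^{r(r-1)/2}$, and the noncompact components are paid for by replacing $\delta$ with $\delta+1$.
\end{itemize}

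The noncompactness step you postpone is not a technicality either. For an arbitrary domain $G$ the statement is false, so a proof must use a hypothesis on $G$ exactly there, and yours uses none. Let $G\subset\mathbb{R}^2$ be a thin simply connected neighbourhood of the spiral arc $\{\theta(\cos\theta,\sin\theta):1<\theta<2\pi K+1\}$. Let $f_1=1/(x_1^2+x_2^2)$, and let $f_2$ be the branch of the polar angle on $G$ that equals $\theta$ on the arc. Then $df_1=-2f_1^2(x_1\,dx_1+x_2\,dx_2)$ and $df_2=f_1(x_1\,dx_2-x_2\,dx_1)$, a chain with $r=2$, $\alpha=3$. The system $x_2=0$, $f_2-x_1=0$ (so $\beta_1=\beta_2=1$) has the $K$ solutions $(2\pi k,0)$, $k=1,\dots,K$, each with Jacobian determinant $1$. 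But $\mathcal{M}(2,2,3,1,1)=98$. In your notation the curve $C=\{x_2=0,\ y=x_1\}\subset G\times\mathbb{R}$ splits into $2K$ arcs ending on $\partial G\times\mathbb{R}$, which no large sphere detects. So the theorem requires something like $G=\mathbb{R}^n$, or a domain whose boundary can be handled by the same kind of count. That assumption is what must bound the noncompact components in your Rolle step.
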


Since our goal is to give an upper bound on the number of limit cycles of equation \eqref{eq:mainPCW}, the second key ingredient will be to prove that the solutions of a linear ODE with trigonometric coefficients are Pfaffian functions. More concretely, in the following result, we prove that the ``time of flight'' of the solution of a linear ODE is Pfaffian.

\begin{lemma}\label{lemma:0}
Fixed  $a,b\in\mathbb{R}[\sin\,t,\cos\,t]$ of degree at most $m$, and $x_1,x_2\in\mathbb{R}$ the functions $t\to T(t,t_2)$ and $t\to T(t_1,t)$, where $T$ is defined by~\eqref{eq:transition}, are Pfaffian of order 4 and degree $(1+3m,1)$, for every $t_1,t_2\in (-\pi,\pi)$.
\end{lemma}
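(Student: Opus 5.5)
Fix $t_2$ and regard $g(t)=T(t,t_2)$ as a function of the single variable $t\in(-\pi,\pi)$; the case $t\mapsto T(t_1,t)$ is symmetric. The plan is to write down an explicit Pfaffian chain of length four, built on the chain for $\sin,\cos$ from the examples above, and then check that $g$ is a polynomial of degree one in the chain functions. Put $f(t)=\tan(t/2)$ and $h(t)=1/(1+f^2(t))$. Then $\sin t=2fh$ and $\cos t=(1-f^2)h=2h-1$, using $f^2h=1-h$. Hence every trigonometric polynomial of degree $m$ can be rewritten as a polynomial in $f,h$. The derivatives are $f'=(1+f^2)/2$ and $h'=-fh$, both of degree at most $2$.

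The chain is completed as follows.
\begin{itemize}
\item Set $A(t)=\int_0^t a(s)\,ds$. Then $E(t)=e^{-A(t)}$ satisfies $E'=-a(t)E$, and the right-hand side is a polynomial in $f,h,E$.
\item The integral $F(t)=\int_{0}^{t}b(s)E(s)\,ds$ satisfies $F'=b(t)E$.
\item The order is then $4$, with chain $(f,h,E,F)$, which is triangular as required.
\item Since $F$ is a primitive, $\int_t^{t_2}bE=F(t_2)-F(t)$, so
\[
T(t,t_2)=x_1E(t)-F(t)+\bigl(F(t_2)-x_2E(t_2)\bigr).
\]
This is affine in $E$ and $F$ with a constant term, hence a polynomial of degree $\beta=1$ in the chain.
\item For the other variable, $T(t_1,t)=x_1E(t_1)-F(t_1)+F(t)-x_2E(t)$ is likewise affine in $E$ and $F$.
\end{itemize}

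The main obstacle is bounding the degree $\alpha$ of the differential equations by $1+3m$. This requires expressing $a$ and $b$ as polynomials in $f,h$ of degree at most $3m$. A naive substitution gives each monomial $\sin^j t\cos^k t$ as $2^j f^j h^j (2h-1)^k$, which has degree $j+2k\le 2m$ in $(f,h)$. Multiplying by $E$ in $E'=-aE$ and $F'=bE$ then gives degree at most $2m+1\le 3m+1$.

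I would check this bookkeeping carefully, and allow the weaker $3m$ if a different representation is used. For instance, the paper may write $\cos t=(1-f^2)h$, which has degree $3$ and yields degree $3m$ for $a,b$. With that representation, $E'$ and $F'$ have degree at most $3m+1$. Either way, $\alpha\le 1+3m$ holds; since $m\ge 1$, it also dominates the degree $2$ of the first two equations.

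A subtlety is the domain. The chain functions must be analytic on the open set $(-\pi,\pi)$, where $\tan(t/2)$ is analytic, so the restriction of $t_1,t_2$ to $(-\pi,\pi)$ is exactly what is needed.
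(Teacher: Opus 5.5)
Your proof is correct and follows the paper's approach. Both use the same length-four chain $\tan(t/2)$, $1/(1+\tan^2(t/2))$, $e^{-A}$ and a primitive of $bE$, with $T$ affine in the last two. Your bookkeeping via $\sin t=2fh$, $\cos t=2h-1$ even gives the sharper $\alpha\le 2m+1$, whereas the paper writes $\sin(kt),\cos(kt)$ as $P_k(f_1)f_2^k,\,Q_k(f_1)f_2^k$ of degree $3k$. One small slip: the monomial $2^jf^jh^j(2h-1)^k$ has degree $2j+k$, not $j+2k$, but your bound of $2m$ still holds.
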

\begin{proof}
We will prove it only for $t\to T(t_1,t)$, as the other function is analogous. First, let us define a Pfaffian chain for our function:
\[
\begin{split}
f_1(t) = \tan\left(\frac{t}{2}\right),
\quad &f_2(t) = \frac{1}{1+f_1^2(t)},\\
f_3(t) = \exp\left(-A(t)\right), \quad 
&f_4(t) = \int_{t_1}^t b(s) f_3(s)\,ds,
\end{split}
\]
where $A(t) := \int_0^t a(t)\,dt$.

The first two functions are trivially part of a Pfaffian chain, as 
\[f_1'(t) = \frac{1 + f_1^2(t)}{2},\quad 
f_2'(t) = - \frac{2 f_1(t) f_1'(t)}{(1+f_1^2(t))^{2}} = -f_1(t) f_2(t).\]
To prove that $f_3$ extends the chain, let us show that
\[
\sin(kt) = \frac{P_k(\tan(t/2))}{(1+\tan^2(t/2))^k}, \quad 
\cos(kt) = \frac{Q_k(\tan(t/2))}{(1+\tan^2(t/2))^k},
\]
where $P_k,Q_k$ are polynomials of degree at most $2k$ with respect to the chain. 

Let us realize that 
\[
e^{ikt} = \cos(kt) + i\sin(kt) =\left( \cos(t) + i\sin(t) \right)^k = \left( \dfrac{1-\tan^2(\frac{t}{2}) + 2i\tan(\frac{t}{2})}{1+\tan^2(\frac{t}{2})} \right)^k,
\]
and so
\[
    \cos(kt) = \Re (e^{ikt}) = \dfrac{\Re (1-\tan^2(\frac{t}{2}) + 2i\tan(\frac{t}{2}))^k}{\left( 1+\tan^2(t/2) \right)^k} = \dfrac{Q_k(\tan(\frac{t}{2}))}{\left( 1+\tan^2(t/2) \right)^k},
\]
where $Q_k(\tan(\frac{t}{2}))$ is the real part of the polynomial $(1-\tan^2(\frac{t}{2}) + 2i\tan(\frac{t}{2}))^k$. We can compute $Q_k$ by Newton's binomial theorem. Note that $Q_k$ has degree $2k$. Analogously with the sine, so we may write

\[
\sin(kt) = \dfrac{P_k(\tan(\frac{t}{2}))}{1+\tan^2(t/2)} = P_k(f_1) f_2^k, \quad \cos(kt) = \dfrac{Q_k(\tan(\frac{t}{2}))}{1+\tan^2(t/2)} = Q_k(f_1)f_2^k.   
\]

Therefore, if $a(t) = a_0 + \sum_{k=1}^m a_k \cos(k\,t) + \sum_{k=1}^m \tilde{a}_k \sin(k\,t)$, then
\[
\begin{split}
f_3'(t) &= -f_3(t) a(t) 
\\&= -f_3(t)\left(
a_0 + \sum_{k=1}^m a_k Q_k(f_1(t))f_2^k(t) + \sum_{k=1}^m \tilde{a}_k P_k(f_1(t))f_2^k(t)\right),
\end{split}
\]
thus a polynomial in $f_1,f_2,f_3$ of degree at most $3m+1$.
\medskip 

Finally, 
\[
f_4'(t) = b(t) f_3(t), 
\]
and arguing as above, $b(t)$ is a polynomial in $f_1,f_2$ of degree at most $3m$, so $bf_3$ has degree at most $3m +1$ as a polynomial in $f_1,f_2,f_3$. 
\medskip 

To conclude, note that
\[
T(t_1,t) = x_1 f_3(t_1) + f_4(t) - x_2 f_3(t).
\]

\end{proof}

\section{Linear piecewise ODE with two regions}

In this section, we obtain an upper bound on the number of limit cycles of a piecewise linear ODE with two regions. We use the structure to find a bound better than the one in the general case, which is included in the next section.

Consider the piecewise-linear differential equation with two regions. By a translation in $x$, it can be written as
\begin{equation}\label{eq:main2}
x' = S(t,x) =
\begin{cases}
a^+(t)x + b^+(t), \quad x \geq 0, \\
a^-(t)x + b^-(t), \quad x < 0,
\end{cases}
\end{equation}  
where $a^+,\,a^-,\,b^+,\,b^-$ are trigonometric polynomials with real coefficients. Let $m=\max \{ \deg (b^+), \deg (b^-) \}$. Recall that continuity is not assumed and that only crossing limit cycles are considered.

If $b^+(t)$ or $b^-(t)$ have constant sign, then crossing limit cycles can not cross the line $x=0$, and the problem can be divided into two linear ones. Consequently, in the following, we assume that both trigonometric polynomials change sign.

The goal is to reduce the study of the number of crossing limit cycles to the number of solutions of a certain system of equations, and then apply Theorem \ref{Khov}. Firstly, we analyze the qualitative structure of the solutions in a general setting, and then we derive necessary and sufficient conditions for having a limit cycle.

\subsection{Structure of the set ofcrossing limit cycles}

Periodic solutions with constant sign are  determined by a linear differential equation, and so it is easy to prove that there are at most two (crossing) limit cycles with constant sign. 
 
\begin{proposition}\label{prop:0}
    Equation \eqref{eq:main2} has at most two limit cycles with constant sign.
\end{proposition}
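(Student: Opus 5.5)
A limit cycle of constant sign lies entirely in $\{x>0\}$ or entirely in $\{x<0\}$; such limit cycles with $x\ge 0$ touching $0$ can be grouped with the positive side by continuity. The plan is therefore to prove that each half-plane contains at most one limit cycle, which gives the bound of two.

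Fix the region $x\geq 0$. There the equation is the linear equation $x'=a^+(t)x+b^+(t)$, and every solution defined on $[0,2\pi]$ is given by variation of constants:
\[
x(t)=e^{A^+(t)}\Bigl(x_0+\int_0^t b^+(s)e^{-A^+(s)}\,ds\Bigr),\qquad A^+(t)=\int_0^t a^+(s)\,ds .
\]
Hence the Poincaré map $P^+(x_0)=x(2\pi)$ of this linear equation is affine:
\[
P^+(x_0)=e^{A^+(2\pi)}x_0+c^+,\qquad c^+=e^{A^+(2\pi)}\int_0^{2\pi}b^+e^{-A^+}\,ds .
\]

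A periodic solution of \eqref{eq:main2} that stays in $x\ge 0$ (or stays in $x<0$) coincides with a periodic solution of the corresponding linear equation. Its initial value is therefore a fixed point of the affine map $P^\pm$. The fixed-point equation $P^\pm(x_0)=x_0$ has two possible outcomes:
\begin{itemize}
\item if $e^{A^\pm(2\pi)}\neq 1$, there is exactly one fixed point;
\item if $e^{A^\pm(2\pi)}=1$, then either there is no fixed point or every $x_0$ is fixed.
\end{itemize}

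The main obstacle is the degenerate case where every $x_0$ is fixed. Then every solution of the linear equation is periodic, so a periodic solution lying strictly inside the open region $x>0$ has a whole band of nearby periodic solutions that also stay in $x>0$. Such a solution is not isolated and is not a limit cycle. One must still take care with a periodic solution that stays in $x\ge 0$ and touches $x=0$. Its neighbours with larger initial values remain periodic solutions of \eqref{eq:main2}, so it is again not isolated. Hence in the degenerate case the region contributes no limit cycles, and in every case it contributes at most one.

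Applying the same argument to $x<0$, with $P^-$, gives at most one more, and the total is at most two.
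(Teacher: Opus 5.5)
Your proposal is correct and follows essentially the same route as the paper. In each half-plane the constant-sign periodic solutions are fixed points of the affine return map $x\mapsto Ax+B$ obtained by variation of constants, so there is at most one unless $A=1$, in which case there are none or all solutions are periodic. Your explicit argument that the degenerate case $A=1$, $B=0$ gives no isolated periodic solutions, including those touching $x=0$, spells out a step the paper leaves implicit.
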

\begin{proof}
Consider all solutions with positive sign (analogous for negative sign) for $t \in [0,2\pi]$. Let $u(t,x)$ be the solution such that $u(0,x) = x$. Then, 
\[
    u(2\pi,x) = x\,e^{ \int_0^{2\pi} a^+(s) \,ds} + \int_0^{\,2\pi} \, b^+(s)\, e^{ \int_{s}^{\,2\pi} a^+(\tau)\,d\tau}ds = Ax + B
\] for some $A,\,B \in \mathbb{R}$. If the solution is periodic, then $u(2\pi,x) = x = Ax + B$. As a consequence, no periodic solutions exist if $A=1$ and $B\neq 0$; all solutions are periodic if $A=1$ and $B=0$; and only one periodic solution exists if $A \neq 1$. Taking into account the unique negative periodic solution as well, we obtain the desired result. 
\end{proof}


By Proposition~\ref{prop:0}, the matter of interest are crossing limit cycles that change sign, so in the following we will refer to them simply as ``crossing limit cycles'', and we will reserve ``limit cycle'' for crossing limit cycles with constant sign.

The following lemma implies that, by a translation in time, we may assume 
that $u(0) < 0$ for every crossing limit cycle.

\begin{lemma}
There exists $\bar{t} \in [0,2\pi]$ such that every crossing limit cycle satisfies $u(\bar{t}) < 0$. 
\end{lemma}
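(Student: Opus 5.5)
The plan is to use the fact that distinct crossing periodic solutions cannot intersect, so they are totally ordered, and then to pick $\bar t$ as a time where the \emph{largest} of them is non-positive. Every crossing limit cycle lies below that largest one, so it will be strictly negative there.

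First, I establish the ordering. At a point of $\{x=0\}$ where the crossing condition $b^+(t)b^-(t)>0$ holds, the vector field points transversally in the same direction on both sides. Hence a crossing solution through such a point is unique both forward and backward in time. Away from $x=0$, uniqueness comes from linearity. Therefore two distinct periodic crossing solutions $u_1,u_2$ never meet, and they satisfy either $u_1<u_2$ on all of $[0,2\pi]$ or $u_2<u_1$ on all of $[0,2\pi]$.

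Second, I bound the sign-changing ones uniformly. Every such solution vanishes at some $t_0\in[0,2\pi]$. On each region it solves a linear equation whose coefficients are bounded by $\|a^\pm\|_\infty$ and $\|b^\pm\|_\infty$. A Gronwall estimate started at $t_0$ then gives $|u(t)|\le C$ for all $t\in[0,2\pi]$, with $C$ depending only on the coefficients. Consequently the initial values $u(0)$ of crossing limit cycles lie in a compact interval, so their supremum $x^*$ is finite.

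Third, I build the limiting object. If $x^*$ is attained by a crossing limit cycle, call it $\bar u$. Otherwise, take a sequence of crossing limit cycles whose initial values increase to $x^*$. By the uniform bound and equicontinuity, they converge uniformly to a $2\pi$-periodic function $\bar u$. In either case $\bar u$ vanishes at some $t_0$ (as a limit of functions each vanishing somewhere), and every crossing limit cycle $u$ satisfies $u\le\bar u$. Now I choose $\bar t$ in two cases.
\begin{itemize}
\item If $\bar u$ is negative somewhere, I take $\bar t$ with $\bar u(\bar t)<0$, which gives $u(\bar t)\le\bar u(\bar t)<0$ for every crossing limit cycle $u$.
\item If $\bar u\ge 0$, then $\bar u$ is not itself a crossing limit cycle, since those change sign. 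In that case I take $\bar t=t_0$.
\end{itemize}

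The main obstacle is the second case: I must show that $u(t_0)<\bar u(t_0)=0$ strictly for every crossing limit cycle $u$, and not merely $u(t_0)\le 0$. Equality $u(t_0)=0$ would mean that $u$ crosses $x=0$ transversally at $t_0$. By the forward/backward uniqueness near a crossing point, the nearby cycles approximating $\bar u$ would then also pass near that point with the same transversal direction. Passing to the limit, $\bar u$ would have to cross as well and hence become negative, contradicting $\bar u\ge 0$. Here I would use continuous dependence on initial conditions near the transversal crossing point to make this rigorous.

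Finally, periodicity lets me take $\bar t\in[0,2\pi]$, and the time translation $t\mapsto t+\bar t$ gives $u(0)<0$ for every crossing limit cycle.
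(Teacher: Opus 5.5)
Your proposal is correct and follows the same route as the paper. You bound the crossing limit cycles uniformly, take the maximal one (or the limit $u_M$ of an increasing sequence), and choose $\bar t$ either as a point where $u_M<0$ or, if $u_M\ge 0$, as a zero of $u_M$; your transversality argument for the strict inequality $u(\bar t)<0$ in the second case (the crossing condition must fail at any zero of a nonnegative limit, so no crossing solution can vanish there) usefully fills in a step the paper only attributes to maximality.
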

\begin{proof}
As the vector field is Lipschitz continuous in $x\neq 0$, any crossing solution is uniformly bounded in $[0,2 \pi]$. In particular, the set of crossing limit cycles is bounded.

If the set of crossing limit cycles is not empty, then there exists a maximal element, $u_M$
. Moreover, it must be a periodic function, either positive or changing sign. 

If $u_M(t)\geq 0$ for every $t\in[0,2\pi]$, as it is the limit of crossing limit cycles, there exists $\bar t \in[0,2\pi]$ such that $u_M(\bar t)=0$. As it is maximal, any crossing limit cycle $u$ satisfies $u(\bar t)<u_M(\bar t)=0$. 

If $u_M$ changes sign, then there exists $\bar t \in[0,2\pi]$ such that $u_M(\bar t)<0$. As a consequence, for any crossing limit cycle $u$, $u(\bar t)\leq u_M(\bar t)<0$.
\end{proof}

Therefore, from now on, we assume that any crossing limit cycle, $u$, satisfies $u(0)<0$. The next step is to reduce the study of the number of crossing limit cycles to the number of simple zeros of a system of equations. 

\begin{proposition}\label{prop:1}
Let $u$ be a crossing limit cycle with $k$ zeros in $(0,2\pi)$ ($k$ even), denoted by $0<t_1<\ldots<t_k<2\pi$. Then $(t_1,\ldots ,t_{k})$, is a solution of the following system of equations:
    \begin{equation}\label{Ec3}
    \begin{split}
        \int_{t_1}^{t_2} b^+(t)\exp \left( \int_t^{t_2}a^+(s)ds \right) dt = 0, \\
        \int_{t_2}^{t_3} b^-(t)\exp \left( \int_t^{t_3}a^-(s)ds \right) dt = 0, \\
        \vdots \\
        \int_{t_{k-1}}^{t_{k}} b^+(t)\exp \left( \int_t^{t_{k}}a^+(s)ds \right) dt = 0, \\
        \int_{t_{k}}^{t_1 + 2\pi} b^-(t)\exp \left( \int_t^{t_1 + 2\pi}a^-(s)ds \right) dt = 0.
    \end{split}
\end{equation}
\end{proposition}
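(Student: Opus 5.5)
Since $u(0)<0$ and the $t_i$ are exactly the zeros of $u$ in $(0,2\pi)$, the plan is to track the sign of $u$ between consecutive zeros and apply the variation of constants formula on each arc, using the zeros as endpoints.

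First I will fix the sign pattern. The crossing condition at a zero $t_i$ says that both one-sided vector fields $b^\pm(t_i)$ are nonzero and have the same sign. Hence $u'(t_i)$ exists, is nonzero, and $u$ changes sign transversally at every $t_i$; in particular $u$ cannot touch $0$ without crossing. Because $u(0)<0$ and there are finitely many zeros, $u<0$ on $(0,t_1)$, $u>0$ on $(t_1,t_2)$, and so on, with the sign alternating at each zero. Periodicity gives $u(2\pi)=u(0)<0$, so the last interval $(t_k,2\pi)$ is negative and $k$ is even. By $2\pi$-periodicity of $u$ and of the coefficients, $u<0$ on $(t_k,t_1+2\pi)$, and $u(t_1+2\pi)=0$.

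Next I will integrate on each arc. On $(t_{2j-1},t_{2j})$ the function $u$ is positive, so it solves the linear equation $x'=a^+(t)x+b^+(t)$ there, and by continuity it does so with boundary values $u(t_{2j-1})=u(t_{2j})=0$. Variation of constants gives
\[
u(t_{2j})=u(t_{2j-1})\exp\Bigl(\int_{t_{2j-1}}^{t_{2j}}a^+\Bigr)+\int_{t_{2j-1}}^{t_{2j}}b^+(t)\exp\Bigl(\int_t^{t_{2j}}a^+(s)\,ds\Bigr)dt .
\]
Substituting $u(t_{2j-1})=u(t_{2j})=0$ yields the corresponding equation of \eqref{Ec3}. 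The same argument with $a^-,b^-$ on the negative arcs $(t_{2j},t_{2j+1})$, and on the final arc $(t_k,t_1+2\pi)$ using the periodic extension, gives the remaining equations.

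The only delicate point is justifying that $u$ satisfies the linear equation up to and including the endpoints. I will handle it with absolute continuity: $u$ satisfies the ODE almost everywhere on the open arc, so $u(t)-u(s)=\int_s^t u'$ holds for $s<t$ interior, and letting $s,t$ tend to the endpoints applies the formula on the closed interval.
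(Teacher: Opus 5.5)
Your proposal is correct and follows the paper's proof: the crossing condition $b^+(t_i)b^-(t_i)>0$ together with $u(0)<0$ gives the alternating sign pattern, then variation of constants on each arc with zero boundary values gives each equation, and periodicity at $t_1+2\pi$ closes the cycle. One small correction: when $S$ is discontinuous, $u'(t_i)$ need not exist, because the one-sided derivatives are $b^-(t_i)$ and $b^+(t_i)$, which may differ; your argument only uses that they are nonzero and have the same sign, so it is unaffected.
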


\begin{proof}
As $u$ is a crossing solution, then $b^+(t_1) b^-(t_1)>0$, but $t_1>0$ is the first zero and $u(0)<0$, so $b^+(t_1),b^-(t_1)>0$ and $u$ changes sign from negative to positive. In particular, $u(t)>0$ for $t\in(t_1,t_2)$ and it is a solution of $x'=a^+(t)x+b^+(t)$ such that $u(t_1)=u(t_2)=0$, so $t_1,t_2$ satisfy 
\[\int_{t_1}^{t_2} b^+(t)\exp \left( \int_t^{t_2}a^+(s)ds \right) dt = 0.
\]
Next, realize that $u$ is negative for $t\in (t_2,t_3)$, so
\[
\int_{t_2}^{t_3} b^-(t)\exp \left( \int_t^{t_3}a^-(s)ds \right) dt = 0.
\]
If we continue with the process for all of the zeros, we have that $t_i,t_{i+1}$ satisfy the corresponding equation for $1\leq i<k$. Finally, as $u$ is periodic, $u(t_1 + 2\pi)=0$. As a consequence, 
\[
\int_{t_{k}}^{t_1 + 2\pi} b^-(t)\exp \left( \int_t^{t_1 + 2\pi}a^-(s)ds \right) dt = 0.
\]

\end{proof}

If $u$ is a crossing limit cycle, then the number of zeros in $(0,2\pi)$ is bounded by $2m$, where $m = \max \{ \deg(b^+),\deg(b^-) \}$. The following proposition implies that the zeros of $u$ are also a solution of system \eqref{Ec3} with $k=2m$ equations.

\begin{lemma}\label{lemma:4}
If a crossing limit cycle $u$ of \eqref{eq:main2} has $l < 2m$ zeros, $t_1,\dots,t_l$, then the tuple $(t_1,\dots,t_{2m})$ such that $t_i = t_{l}$ for $i=l+1,\ldots,2m$ satisfies system~\eqref{Ec3} for $k=2m$.
\end{lemma}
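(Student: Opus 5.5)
The plan is to verify directly that the padded tuple satisfies each equation of system~\eqref{Ec3} with $k=2m$. The equations split into three groups: the first $l-1$ equations, the padding equations, and the closing equation.

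\textbf{Parity of $l$.} First I would note that $l$ is even. By the preceding lemma we may assume $u(0)<0$, and $u(2\pi)=u(0)<0$. Every zero of a crossing solution is transversal: at a zero the two one-sided vector fields have the same nonzero sign, so $u$ changes sign there. Hence the number of sign changes in $(0,2\pi)$ is even. Together with the hypothesis $l<2m$, this gives $l\le 2m-2$, so there is at least one padding index.

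\textbf{The first $l-1$ equations.} For $1\le i\le l-1$, the $i$-th equation of \eqref{Ec3} with $k=2m$ is literally the same as the $i$-th equation of \eqref{Ec3} with $k=l$. In both systems the coefficients alternate: $b^+,a^+$ for odd $i$ and $b^-,a^-$ for even $i$. By Proposition~\ref{prop:1} applied to $u$ with its $l$ zeros, these equations hold.

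\textbf{The padding equations.} For $l\le i\le 2m-1$, the $i$-th equation has integration interval $[t_i,t_{i+1}]=[t_l,t_l]$, because $t_{i+1}=t_l$ for $i\ge l$ and $t_i=t_l$ for $i=l$. An integral over a degenerate interval vanishes. So these equations hold trivially, whichever sign pattern $\pm$ they carry.

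\textbf{The closing equation.} This is the only step that needs care, namely matching the sign pattern. The closing equation of the $2m$-system reads
\[
\int_{t_{2m}}^{t_1+2\pi} b^-(t)\exp\Bigl(\int_t^{t_1+2\pi}a^-(s)\,ds\Bigr)dt=0 .
\]
Since $t_{2m}=t_l$ and the closing equation of the $l$-system also uses $b^-,a^-$ (both $l$ and $2m$ are even), this is exactly the last equation of Proposition~\ref{prop:1} for $u$. It therefore holds, which completes the verification.

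\textbf{Remaining subtlety.} The ordering in the padded tuple is no longer strict, since $t_l=t_{l+1}=\dots=t_{2m}$. I would remark that this is harmless: system~\eqref{Ec3} is a set of equations in the variables $(t_1,\dots,t_{2m})$ and does not itself impose strict ordering. The main point to check is therefore only the parity and sign bookkeeping above.
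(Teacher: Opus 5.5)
Your proof is correct and follows the same route as the paper: the first $l-1$ equations come from Proposition~\ref{prop:1}, the padding equations hold because their integrals are over degenerate intervals, and the closing equation is that of the $l$-system because $t_{2m}=t_l$. Your explicit parity check, showing $l$ is even so that the closing equation involves $b^-,a^-$ in both systems, spells out a point the paper leaves implicit.
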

\begin{proof}
The zeros $t_1,\dots,t_l$ constitute a solution for the first $l-1$ equations from system~\eqref{Ec3} with $k=2m$. As $t_{2m}=t_l$, the last equation also follows. Finally, as for $i=l+1,\dots,2m,\, t_i = t_l$, the remaining equations from system~\eqref{Ec3} are trivially satisfied.
\end{proof}


Let $u$ be a crossing limit cycle. The {\em displacement map}, $x\to d(x)$, is defined as $d(x) = u(2\pi,x) - x$, where $u(t,x)$ is the solution determined by $u(t,0)=x$. $d$ is analytical in a neighborhood of $u(0)$ (see e.g. \cite[p. 39--40]{CoddingtonLevinson1955} or \cite[Proposition~2]{Tineo}), and $d(u(0))=0$. The crossing limit cycle is {\em simple} if $d'(u(0))\neq 0$.

Next we prove that simple crossing limit cycles are related to non-degenerate solutions of~\eqref{Ec3}.  

\begin{proposition}\label{prop:simple}
Let $u$ be a crossing limit cycle with (even) $k$  zeros in $(0,2\pi)$,  $0<t_1<t_2<\ldots<t_k<2\pi$. If $u$ is simple then  $(t_1,\ldots,t_k)$ is a non-degenerate solution of \eqref{Ec3}.

Moreover, the tuple $(t_1,\dots,t_{2m})$ such that $t_i = t_{k}$ for $i=k+1,\ldots,2m$ is also a non-degenerate solution of system~\eqref{Ec3} for $k=2m$.
\end{proposition}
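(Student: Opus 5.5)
The plan is to compute the Jacobian of system~\eqref{Ec3} at $(t_1,\dots,t_k)$ explicitly. I would then show that its determinant equals a nonzero factor times $d'(u(0))$, where $d$ is the displacement map.

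\textbf{Partial derivatives.} Write the $i$-th equation as
\[
F_i(t_i,t_{i+1})=\int_{t_i}^{t_{i+1}} b_i(t)\,e^{\int_t^{t_{i+1}}a_i(s)ds}\,dt ,
\]
with $(a_i,b_i)=(a^\pm,b^\pm)$ alternating and $t_{k+1}=t_1+2\pi$. For fixed $t_i$, the map $s\mapsto F_i(t_i,s)$ is the solution of $x'=a_ix+b_i$ with $x(t_i)=0$. Hence
\[
\partial_{t_{i+1}}F_i=b_i(t_{i+1})+a_i(t_{i+1})F_i=b_i(t_{i+1})\quad\text{on the solution},\qquad
\partial_{t_i}F_i=-b_i(t_i)\,E_i,
\]
where $E_i=e^{\int_{t_i}^{t_{i+1}}a_i}$. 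So the Jacobian is cyclic bidiagonal: diagonal entries $D_i=-b_i(t_i)E_i$, superdiagonal entries $U_i=b_i(t_{i+1})$, and one corner entry $U_k$. Expanding the determinant gives $\det J=\prod D_i-(-1)^{k}\prod U_i$. Since $k$ is even,
\[
\det J=\prod_{i} b_i(t_i)E_i-\prod_i b_i(t_{i+1}).
\]
At each zero $t_j$ the solution exits one region and enters the other. Here $b_i(t_i)$ is the field of the entered region at $t_i$, and $b_{i}(t_{i+1})$ is the field of the exited region at $t_{i+1}$. Every factor is nonzero because $u$ is a crossing solution.

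\textbf{Relating $\det J$ to $d'(u(0))$.} The derivative of the Poincaré map $P(x)=u(2\pi,x)$ of a crossing orbit is the product of the linear multipliers $E_i$ and the saltation factors at each transversal crossing of $x=0$. The saltation factor at $t_j$ is $S_{\rm enter}(t_j,0)/S_{\rm exit}(t_j,0)=b_{\rm enter}(t_j)/b_{\rm exit}(t_j)$. The multipliers from $0$ to $t_1$ and from $t_k$ to $2\pi$ combine into $E_k$ once we use the $2\pi$-periodicity of $a^-$. This gives
\[
P'(u(0))=\prod_i E_i\prod_j \frac{b_{\rm enter}(t_j)}{b_{\rm exit}(t_j)},
\qquad\text{hence}\qquad
\det J=\Bigl(\prod_j b_{\rm exit}(t_j)\Bigr)\,d'(u(0)).
\]
This is nonzero exactly when $u$ is simple.

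\textbf{Main obstacle.} The step I expect to be hardest is justifying the formula for $P'$. One route is to differentiate the crossing times implicitly (implicit function theorem, using $b_{\rm exit}(t_j)\neq0$) and track the perturbation through each piece. The cleaner alternative is to avoid $P'$ altogether. On the solution set, the map $x\mapsto(t_1(x),\dots)$ defined by the first $k-1$ equations is a local parametrization. Then $d'(u(0))\neq0$ is equivalent to the last equation having nonzero derivative along this curve, and that derivative equals $\det J$ divided by the (nonzero) product of the first $k-1$ diagonal terms.

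\textbf{Padded tuple.} For $(t_1,\dots,t_{2m})$ with $t_i=t_k$ for $i>k$, the system is still cyclic bidiagonal, so the same determinant formula holds with $2m$ (even) factors. The original $k$-th equation now runs from $t_{2m}=t_k$ to $t_1+2\pi$. It has the same entries as before, since it involves the same region, the same interval and the same $t_k$.

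Each of the $2m-k$ inserted degenerate equations $F_i(t_k,t_k)=0$ contributes $D_i=-b_i(t_k)$ and $U_i=b_i(t_k)$. These are nonzero by the crossing condition $b^+(t_k)b^-(t_k)>0$. Since $2m-k$ is even, the signs cancel, and the products of the inserted $D_i$ and of the inserted $U_i$ coincide; call this common value $c\neq0$. Therefore $\det J_{2m}=c\,\det J_k\neq0$, which proves non-degeneracy of the padded solution.
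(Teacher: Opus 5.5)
Your proposal is correct and follows essentially the same route as the paper. The paper also shows that the Jacobian of \eqref{Ec3} is cyclic bidiagonal with $\det J=\prod_i b_i(t_i)E_i-\prod_i b_i(t_{i+1})$. It identifies this with $\bigl(\prod_j b_{\mathrm{exit}}(t_j)\bigr)\,d'(u(0))$ by computing $d'$ through implicit differentiation of the crossing times, which is the first route you list.

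Your handling of the padded tuple is more careful than the paper's. You note that the inserted equations alternate between $b^+$ and $b^-$ and that the signs $(-1)^{2m-k}$ cancel. In fact the common factor is $(b^+(t_k)b^-(t_k))^{(2m-k)/2}>0$.
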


\begin{proof}

The displacement function $d$ in a neighborhood of $u(0)$ is defined as 
\[
d(x) = T(t_k(t_{k-1}(\ldots t_2(t_1(x))\ldots)))-x,
\]
where the functions $t_1,\ldots,t_k,T$ are related to the times of the solution of \eqref{eq:main2} passing by two values, and are defined implicitly as
\begin{equation}\label{eq:retorno}
    \begin{split}
    x\,e^{ \int_0^{t_1} a^-(s) \,ds} + \int_0^{t_1} \, b^-(s)\, e^{ \int_{s}^{t_1} a^-(\tau)\,d\tau}ds=0, \\
        \int_{t_1}^{t_2} b^+(t)\exp \left( \int_t^{t_2}a^+(s)ds \right) dt = 0, \\
        \vdots \\
        \int_{t_{k-1}}^{t_{k}} b^+(t)\exp \left( \int_t^{t_{k}}a^+(s)ds \right) dt = 0, \\
        \int_{t_{k}}^{2\pi} b^-(t)\exp \left( \int_t^{2\pi}a^-(s)ds \right) dt = T.
    \end{split}
\end{equation}
We may write $t_1(x),\dots,t_i(t_{i-1}),\dots,T(t_k)$ for $i=2,\dots,k$, as the partial derivative of the $i$-th equation with respect to $t_i$ is not null, and so the Implicit Function Theorem applies.

Deriving with respect to $x$ in the first equation, one gets
\[
\begin{split}
&e^{ \int_0^{t_1(x)} a^-(s) \,ds} + x\,a^-(t_1(x))e^{ \int_0^{t_1(x)} a^-(s) \,ds}\,t_1'(x) + b(t_1(x))t_1'(x) \\
&+ a^-(t_1(x))\int_0^{t_1(x)} \, b^-(s)\, e^{ \int_{s}^{t_1} a^-(\tau)\,d\tau}ds\, t_1'(x),
\end{split}
\]
and using the first equation of \eqref{eq:retorno}, we get
\[
e^{ \int_0^{t_1(x)} a^-(s) \,ds}  +  b^-(t_1) t_1'(x) = 0.
\]
That is,
\[
t_1'(x) = \frac{-e^{ \int_0^{t_1(x)} a^-(s) \,ds}}{ b^-(t_1)}.
\]

Now consider $t_{j+1}(t_{j})$, defined by the $j$-th equation in \eqref{eq:retorno}. Derivating with respect to $t_j$, one obtains
\[
t_{j+1}'(t_j)
=
\frac{
b^{\sigma(j)}(t_j)
\exp\left(\int_{t_j}^{t_{j+1}} a^{\sigma(j)}(s)\,ds\right)
}{
b^{\sigma(j)}(t_{j+1})
},
\]
where $\sigma(j) = +$ for $j$ odd and $\sigma(j)=-$ for $j$ even.

Finally,
\[
T'(t_k)=
-b^-(t_k)\exp\left(\int_{t_k}^{2\pi} a^-(s)\,ds\right).
\]

Replacing the previous expressions, and using that $a^-$ is $2\pi$-periodic,
\[
\begin{split}
d'(x)
&= T'(t_k) t_k'(t_{k-1}) \ldots t_2'(t_1) t_1'(x) - 1\\
&=
\frac{b^-(t_k)}{b^-(t_1)} e^{
\int_{0}^{t_1} a^-(s)\,ds+\int_{t_k}^{2\pi} a^-(s)\,ds}
\prod_{j=1}^{k-1}
\frac{
b^{\sigma(j)}(t_{j})
e^{\int_{t_j}^{t_{j+1}} a^{\sigma(j)}(s)\,ds}
}{
b^{\sigma(j)}(t_{j+1})
} -1\\
&=
\frac{b^-(t_k)e^{\int_{t_k}^{t_1+2\pi} a^-(s)\,ds}}{b^-(t_1)} 
\prod_{j=1}^{k-1}
\frac{
b^{\sigma(j)}(t_{j})
e^{\int_{t_j}^{t_{j+1}} a^{\sigma(j)}(s)\,ds}
}{
b^{\sigma(j)}(t_{j+1})
} -1.
\end{split}
\]

Assume that $d'(x) \neq 0$, and let us prove that the Jacobian of~\eqref{Ec3} in the solution $(t_1,\ldots,t_k)$ is an invertible matrix. 
A simple computation shows that 
the Jacobian matrix is the cyclic bidiagonal matrix 
\[
J=
\begin{pmatrix}
-d_1 & c_1 & 0 & \cdots & 0 \\
0 & -d_2 & c_2 & \cdots & 0\\
\vdots && \ddots & \ddots & \vdots \\
0 & \cdots & 0 & -d_{k-1} & c_{k-1}\\
c_k & 0 & \cdots & 0 & -d_k
\end{pmatrix},
\]
where for $j=1,\dots,k-1$,
\[
d_j
=
b^{\sigma(j)}(t_j)
\exp\!\left(\int_{t_j}^{t_{j+1}} a^{\sigma(j)}(s)\,ds\right),
\qquad
c_j = b^{\sigma(j)}(t_{j+1}),
\]
and
\[
d_k
=
b^{-}(t_k)
\exp\!\left(\int_{t_k}^{t_1+2\pi} a^{-}(s)\,ds\right),
\qquad
c_k = b^{-}(t_1+2\pi)=b^{-}(t_1).
\]

The determinant is (we recall $k$ is even)
\[
\det(J)
=
d_1 d_2 \cdots d_k
-
c_1 c_2 \cdots c_k
=c_1 c_2 \cdots c_k d'(x),
\]
so it is not null provided that $d'(x)\neq 0$ ($c_j=b^{\sigma(j)}(t_j)\neq 0$ as we are considering crossing limit cycles).

Finally, observe that if we consider the system in Lemma~\ref{lemma:4}, it only adds an even number of equations with derivatives of the form
\[
d_j = b^-(t_k) ,\quad c_j = b^-(t_k).
\]
Therefore, they do not affect the computation of $d'(x)$ and just multiply $\det(J)$ by a positive number.
\end{proof}

Next, we prove that it is sufficient to consider simple crossing limit cycles when obtaining an upper bound. To that end, consider the family
\begin{equation}\label{Ec3par}
x' = S(t,x)+\lambda,\quad \lambda\in\mathbb{R}.    
\end{equation}
Note that for every $\lambda\in\mathbb{R}$, \eqref{Ec3par} is of the form~\eqref{eq:main2}. As the vector field is monotonous with respect to $\lambda$, if $d(x,\lambda)$ is the displacement map in terms of $\lambda$, then $d_\lambda(x,\lambda)>0$, and the Implicit Function Theorem implies that $d(x,\lambda)=0$ defines a function $\lambda(x)$ locally.

\begin{proposition}\label{prop:simple_2zonas}
If \eqref{eq:main2} has $N$ crossing limit cycles, then  \eqref{Ec3par}  has at least $N$ simple crossing limit cycles for $\lambda$ sufficiently small.    
\end{proposition}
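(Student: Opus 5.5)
The plan is to treat the displacement map $d(x,\lambda)$ of \eqref{Ec3par} as a family of analytic functions of $x$ that depends monotonically on $\lambda$. I will then show that each crossing limit cycle of \eqref{eq:main2} gives rise to at least one simple zero of $d(\cdot,\lambda)$ for all small $\lambda$ of a suitable sign.

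Let $u^1,\dots,u^N$ be the crossing limit cycles of \eqref{eq:main2}, and set $x^j=u^j(0)$, ordered so that $x^1<\dots<x^N$. Since each $u^j$ is isolated among the periodic crossing solutions, I will choose pairwise disjoint intervals $I_j=[x^j-\delta,x^j+\delta]$ with two properties. First, $d(\cdot,0)$ is defined and analytic on a neighbourhood of $I_j$; here I use the crossing condition, which persists under small perturbations, together with the analyticity quoted before Proposition~\ref{prop:simple}. Second, $x^j$ is the only zero of $d(\cdot,0)$ in $I_j$. Because $d(\cdot,0)$ is analytic and $x^j$ is an isolated zero, the zero has finite multiplicity $\mu_j\ge 1$.

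The argument then splits into two cases.

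\emph{Odd multiplicity.} Here $d(\cdot,0)$ changes sign on $I_j$. By continuity of $d$ in $\lambda$, for $|\lambda|$ small $d(\cdot,\lambda)$ still has opposite signs at the two endpoints of $I_j$, so it has a zero in $I_j$.

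\emph{Even multiplicity.} Here $d(\cdot,0)$ has constant sign, say $s_j$, on $I_j\setminus\{x^j\}$. Since $d_\lambda>0$, taking $\lambda$ small with sign $-s_j$ produces at least two zeros near $x^j$. The reason is that $d(x^j,\lambda)$ has sign $-s_j$, while the endpoint values keep sign $s_j$ when $|\lambda|$ is small compared with $\min_{\partial I_j}|d(\cdot,0)|$.

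These cases conflict: different $j$ may require $\lambda$ of different signs. I expect this sign bookkeeping to be the main obstacle, and I will handle it with a Sard/genericity argument rather than case by case.

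Consider $F(x,\lambda)=d(x,\lambda)$ on $\bigcup_j I_j\times(-\varepsilon,\varepsilon)$. Since $F_\lambda>0$, $0$ is a regular value of $F$, so $Z=F^{-1}(0)$ is a graph $\lambda=\lambda(x)$ over $\bigcup I_j$, where $\lambda(x)$ is analytic by the implicit function theorem. Simple zeros of $d(\cdot,\lambda_0)$ are exactly the points $x$ with $\lambda(x)=\lambda_0$ and $\lambda'(x)\neq 0$. By Sard's theorem, or simply because $\lambda(\cdot)$ is analytic and non-constant on each $I_j$ (it vanishes only at $x^j$, being the unique zero there), its critical values are finite on the compact $I_j$. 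Hence, for all but finitely many small $\lambda_0$, every solution of $\lambda(x)=\lambda_0$ is simple.

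Finally, on $I_j$ the function $\lambda(x)$ vanishes exactly at $x^j$. If it changes sign there, it takes every small value of either sign. If it does not change sign, it takes every small value of one sign, and twice. Choosing a regular small $\lambda_0\neq 0$ therefore yields at least one simple zero in each $I_j$ that lies on the correct side. To get $N$ simple zeros simultaneously, I will use the following fact: if $\lambda(x)$ has constant sign near $x^j$, that sign is $-s_j$, and the two preimages can be used to compensate. Concretely, I will count the even-multiplicity cycles of each sign and pick the sign of $\lambda_0$ that favours the larger group. This gives two simple zeros for each favoured cycle and one for each odd-multiplicity cycle. The worry is that the cycles of the unfavoured sign are lost. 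I expect to need the statement's flexibility here, "at least $N$", plus the observation that each favoured even cycle yields two simple zeros. This gives $2(\#\text{favoured})+\#\text{odd}\ge N$ when the favoured group is at least as large as the unfavoured one. Since the intervals are disjoint, the zeros obtained are distinct simple crossing limit cycles of \eqref{Ec3par}.
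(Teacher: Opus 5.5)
Your proposal is correct and follows essentially the same route as the paper: both use $d_\lambda>0$ to write the zero set of $d$ near each cycle as a graph $\lambda=\lambda(x)$, separate sign changes of $\lambda(x)$ from local extrema, and fix the sign of $\lambda$ to favour the more numerous type of extremum, which gives at least (number of sign changes) $+\,2\,$(number of favoured extrema) $\geq N$ cycles. Your treatment of simplicity and of the final count is more explicit than the paper's; for simplicity you use that the critical points of the analytic function $\lambda(x)$ are isolated, so every preimage of a small $\lambda_0\neq 0$ near $x^j$ is regular.
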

\begin{proof}

Let $u(t)$ be a crossing limit cycle for $\lambda = 0$. Then $u(0)$ is an isolated zero of the  map $x \to d(x,0)$. Therefore, $u(0)$ is a local maximum, a local minimum, or a change of sign of $\lambda(x)$. 

Assume $u(0)$ is a simple zero, that is, $d_x(u(0),0)\neq 0$. As $\lambda(x)$ is defined implicitly as $d(x,\lambda(x))=0$, deriving one obtains
\[0=d'(x,\lambda(x)) = d_x(x,\lambda(x)) + d_{\lambda}(x,\lambda(x))\lambda'(x),\]
\[\lambda'(u(0)) = -d_x(u(0),0)/d_{\lambda}(x,0) \neq 0.\]
So $\lambda(x)$ is locally a monotonous function. In consequence, locally, there is exactly one crossing limit cycle for $\lambda$ small enough, and it is simple for $\lambda$ sufficiently small. 

Now, assume that $\bar x=u(0)$ is not simple, so in particular $\lambda'(\bar x)=0$.

If $\bar x$ is a change of sign of $\lambda$, then for each sufficiently small fixed $\lambda$, the function $x \to d(x,\lambda)$ changes sign near $\bar x$, so it has locally exactly one limit cycle. Moreover, by analyticity, it is simple for $\lambda$ sufficiently small.

If $\bar x$ is an extremum of $\lambda$, then taking $\lambda>0$ sufficiently enough if it is a minimum or $\lambda<0$ small enough if it is a maximum, we obtain two crossing limit cycles. 
Now, by conveniently selecting $\lambda$ small enough and with the convenient sign (positive if $d$ has more minima than maxima and negative otherwise), we get the desired result.

\end{proof}

\subsection{Khovanskiĭ theory}

Let us now state and prove the main result for this section. Let \[m = \max \{ \deg(b^+), \deg(b^-) \},\quad N = \max \{m,\deg(a^+), \deg(a^-)\}.\]

\begin{theorem}\label{thm:B}
The number of limit cycles of~\eqref{eq:main2} for $m,N$ fixed, $\mathcal{H}(1,m,N)$,  satisfies  
\[
\mathcal{H}(1,m,N) \le 
2^{4m(8m-1)}\,(6Nm+2m+1)^{8m} + 2.
\]
\end{theorem}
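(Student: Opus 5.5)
The bound splits into the constant-sign part and the sign-changing part. The constant-sign part is Proposition~\ref{prop:0}: it gives at most two limit cycles that do not cross $x=0$, which is the "$+2$". For the sign-changing crossing limit cycles I argue by perturbation. Suppose \eqref{eq:main2} has $N$ crossing limit cycles. By Proposition~\ref{prop:simple_2zonas}, \eqref{Ec3par} then has at least $N$ simple crossing limit cycles for some small $\lambda$. Adding the constant $\lambda$ to $b^\pm$ does not raise the degrees, so $m$ and $N$ are unchanged; after the time translation of the preceding lemma we may still assume $u(0)<0$. It therefore suffices to bound the number of simple crossing limit cycles.

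The key steps are as follows.

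\begin{enumerate}
\item Since $b^\pm$ have degree at most $m$, each has at most $2m$ zeros in a period. The zeros of a crossing cycle lie where $b^+b^->0$, so every crossing cycle has $k\le 2m$ zeros.
\item By Proposition~\ref{prop:simple}, each simple crossing cycle yields a non-degenerate solution $(t_1,\dots,t_{2m})$ of \eqref{Ec3} with $k=2m$, padding by repeated $t_k$.
\item Distinct cycles give distinct tuples, since $t_1$ and the initial flow determine $u$. So $N$ is at most the number of non-degenerate solutions of a system of $2m$ equations in $2m$ unknowns.
\item Apply Theorem~\ref{Khov} to this system.
\end{enumerate}

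To apply Theorem~\ref{Khov}, each equation is first multiplied by the nonvanishing factor $\exp(-\int_0^{t_{j+1}}a^{\sigma}(s)\,ds)$. The $j$-th equation then becomes
\[
F^{\sigma}(t_{j+1})-F^{\sigma}(t_j)=0,\qquad F^{\pm}(t)=\int_0^t b^{\pm}(s)e^{-\int_0^s a^{\pm}}\,ds .
\]
This is a function of the form $T$ in \eqref{eq:transition} with $x_1=x_2=0$. Multiplying by a nonvanishing function preserves both the zeros and their non-degeneracy, since at a zero the Jacobian changes by an invertible diagonal factor.

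Next I build a common Pfaffian chain, following Lemma~\ref{lemma:0}, in all variables $t_1,\dots,t_{2m}$ at once. For each variable $t_i$ I take $\tan(t_i/2)$, $1/(1+\tan^2(t_i/2))$, $\exp(-A^{\sigma}(t_i))$ and $F^{\sigma}(t_i)$.

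The time variables must lie in an interval of length $2\pi$ where $\tan(t/2)$ is analytic. After a shift I take the domain $G$ to be the ordered simplex inside $(-\pi,\pi)^{2m}$. The last equation involves $t_1+2\pi$, but by periodicity of $b^-$, $F^-(t_1+2\pi)=F^-(t_1)+F^-(2\pi)\,e^{-\int_0^{t_1}\cdots}$ up to a constant factor. So it is still Pfaffian in the same chain, at the cost of one extra product term.

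I would ideally use the parity alternation to attach only one sign's exponential and integral to each $t_i$. But $t_i$ appears in two equations with opposite signs, so each variable likely needs both $\pm$ exponentials and integrals. That gives $2$ trigonometric functions plus $2\times 2$ exponential/integral functions per variable, i.e.\ $6$ per variable. A chain of order $r=8m$ over $2m$ variables, i.e.\ $4$ per variable, is what matches the stated exponent $2^{4m(8m-1)}=2^{r(r-1)/2}$. Matching this requires organising the chain carefully: for instance $\tan$ and $g$ shared cleverly, with the exponential and integral for the one sign attached to each $t_i$ that actually occurs.

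The degree of the chain is $\alpha=3N+1$ by Lemma~\ref{lemma:0}. Each equation has $\beta_i$ equal to $1$ or $2$. With $n=2m$ and $\min\{n,r\}=2m$, the base of the power is $2m(3N+1)+\sum\beta_i-2m+1=6Nm+2m+1$ when $\sum\beta_i=2m$. So the target constant requires all $\beta_i=1$, which forces the last equation, containing $t_1+2\pi$, to be handled by shifting the base point of $F^-$ rather than by a product. The product $\prod\beta_i=1$ then gives exactly $2^{4m(8m-1)}(6Nm+2m+1)^{8m}$.

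The main obstacle is this bookkeeping: getting the chain order down to $4$ per variable and all $\beta_i=1$ while dealing with the wrap-around equation and the analyticity domain of $\tan(t/2)$. If it fails, the same argument still gives a finite bound of the same shape with slightly worse constants.
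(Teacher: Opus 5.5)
Your reduction is the paper's: the $+2$ from Proposition~\ref{prop:0}, perturbation to simple cycles via Proposition~\ref{prop:simple_2zonas}, padding to $2m$ zeros with non-degeneracy from Proposition~\ref{prop:simple}, then Theorem~\ref{Khov}. However, the proposal does not prove the stated constant. The one step that produces it, a common chain of order $r=8m$ with all $\beta_i=1$, is left open. The chain you actually write down ($\tan(t_i/2)$, $g_i$, $E^{\pm}(t_i)$, $F^{\pm}(t_i)$ for every $i$) has order $12m$, which yields $2^{6m(12m-1)}(6Nm+2m+1)^{12m}+2$; "slightly worse constants" is not the theorem. Your diagnosis of the obstruction is correct, and it also applies to the paper's own chain. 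The paper takes $f_{3_j}=\exp(-\int_0^{t_j}a^{\sigma(j)})$ and $f_{4_j}=\int_{t_j}^{t_{j+1}}f^{\sigma(j)}$, and writes $\partial f_{4_j}/\partial t_{j+1}=b^{+}(t_{j+1})f_{3_{j+1}}$, although $f_{3_{j+1}}$ is built from $a^{-}$. As long as the exponentials are attached to the crossing times, every $t_j$ needs both signs, and the corrected version of the paper's chain has order $10m$.

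The missing idea is to attach the exponential, as well as the integral, to each arc $(t_j,t_{j+1})$, and to keep \eqref{Ec3} in its original, un-normalised form. With $t_{2m+1}=t_1+2\pi$, set
\[
\Phi_j=\exp\Bigl(\int_{t_j}^{t_{j+1}}a^{\sigma(j)}\Bigr),\qquad h_j=\int_{t_j}^{t_{j+1}}b^{\sigma(j)}(s)\exp\Bigl(\int_s^{t_{j+1}}a^{\sigma(j)}\Bigr)\,ds ,
\]
so that
\[
d\Phi_j=\Phi_j\bigl(a^{\sigma(j)}(t_{j+1})\,dt_{j+1}-a^{\sigma(j)}(t_j)\,dt_j\bigr),\qquad dh_j=\bigl(b^{\sigma(j)}(t_{j+1})+a^{\sigma(j)}(t_{j+1})\,h_j\bigr)\,dt_{j+1}-b^{\sigma(j)}(t_j)\,\Phi_j\,dt_j .
\]
By periodicity, $a^-(t_1+2\pi)=a^-(t_1)$ and $b^-(t_1+2\pi)=b^-(t_1)$. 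Hence these coefficients are polynomials of degree at most $3N+1$ in $\Phi_j$, $h_j$ and the trigonometric chain functions of $t_j$ and $t_{j+1}$. Multivariable chain functions are allowed, just as the paper's $f_{4_j}$ already is one.

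Thus $f_{1_i},f_{2_i}$ ($i\le 2m$), $\Phi_1,\dots,\Phi_{2m}$, $h_1,\dots,h_{2m}$ form a chain of order exactly $8m$ and degree $3N+1$. The system is $h_1=\dots=h_{2m}=0$, which is literally \eqref{Ec3}, with every $\beta_j=1$. Its Jacobian at a solution is exactly the cyclic matrix of Proposition~\ref{prop:simple}, so no rescaling argument is needed. Theorem~\ref{Khov} then gives $2^{4m(8m-1)}(6Nm+2m+1)^{8m}$.

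Two smaller corrections. First, your periodicity identity should read $F^-(t_1+2\pi)=F^-(2\pi)+e^{-\int_0^{2\pi}a^-}F^-(t_1)$. The exponential there is a constant, so the wrap-around equation is affine and no product term ever appears. Second, the open ordered simplex does not contain the padded tuples with repeated entries. Take the domain to be $(0,2\pi)^{2m}$ and use $\tan((t_i-\pi)/2)$, which is analytic there and gives the same degrees.
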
 
\begin{proof}
By previous discussion, we may assume that any crossing limit cycle, $u$, has $2m$ zeros, is simple, and satisfies $u(0) < 0$. From Proposition \ref{prop:1}, we get that if $t_1\leq \dots\leq t_{2m}$ are the zeros of $u$ in $(0,2\pi)$, then $(t_1,\dots,t_{2m})$ is a non-degenerate solution of \eqref{Ec3}, which can be written as follows after some computations:
\begin{equation}\label{mainsystem}
\begin{split}
\int_{t_1}^{t_2} f^+(\tau)\,d\tau=0,\\
\int_{t_2}^{t_3} f^-(\tau)\,d\tau=0,\\
\vdots \\
\int_{t_{k-1}}^{t_{2m}} f^+(\tau)\,d\tau=0,\\
\int_{t_{2m}}^{t_1+2\pi} f^-(\tau)\,d\tau=0,\\
\end{split}
\end{equation}
where 
\[
f^+(\tau) = b^+(\tau) \exp\left(-\int_0^{\tau} a^+(s)\,ds\right),\quad
f^-(\tau) = b^-(\tau) \exp\left(-\int_0^{\tau} a^-(s)\,ds\right).
\]

From Lemma \ref{lemma:0}, we know that all the equations in the system are Pfaffian. Nevertheless, we need some additional functions for the Pfaffian chain, since in equation \eqref{eq:main2} we have two different pieces with two different polynomials for each region. Let us define a Pfaffian chain for system \eqref{mainsystem} based on the proof of Lemma \ref{lemma:0}.

Let $ t = (t_1,\dots,t_{2m})$ and define the functions  \[f_{1_j}(t) = \tan\left(\frac{t_j}{2}\right), \qquad f_{2_j}(t) = \frac{1}{1+f_{1_j}^2(t)},\]
where $j = 1,\dots,2m$. These functions are part of the Pfaffian chain, since 
\[
\begin{split}
    d f_{1_j}(t) &= \dfrac{1 + f_{1_j}^2(t)}{2}\,dt_j, \\
     d f_{2_j}(t) &= -f_{1_j}(t)f_{2_j}(t)\,dt_j,
\end{split}
\]
that is, their derivatives can be expressed as quadratic polynomials in terms of the function and previous functions of the chain.
Next, we need to define different exponential functions
\[ f_{3_j}(t) = \exp \left( - \int_0^{t_j} a^{\sigma(j)}(s)\, ds \right), \]
where $a^{\sigma(j)}=a^+$ when $j$ is odd and $a^{\sigma(j)}=a^-$ when $j$ is even. As it was proved earlier, these functions are part of the chain, since
\[
d f_{3_j}(t) = -f_{3_j}(t)a^{\sigma(j)}(t_j)\,dt_j, 
\]
\[
a^{\sigma(j)}(t_j)=
a_0^{\sigma(j)} + \sum_{k=1}^N a_k^{\sigma(j)} Q_k(f_{1_j}(t))f_{2_j}^k(t) + \sum_{k=1}^N \tilde{a}_k^{\sigma(j)} P_k(f_{1_j}(t))f_{2_j}^k(t),
\]
where $a_i^{\sigma(j)},\tilde a^{\sigma(j)}_i$ are the coefficients of $a^{\sigma(j)}$ as a trigonometric polynomial (see Lemma~\ref{lemma:0} for more details).
Note that $df_{3_j}$ has degree $3N+1$ as a polynomial on the functions of the chain for $j=1,\dots,2 m$.
Last, we need $2m$ functions 
\[ 
f_{4_j}(t) = \int_{t_j}^{t_{j+1}} f^{\sigma(j)}(s)\,ds, 
\]
 where $j=1,\dots,2m$, being $t_{2m+1} = t_1 + 2\pi$, $f^{\sigma(j)}=f^+$ if $j$ is odd, and
 $f^{\sigma(j)}=f^-$ if $j$ is even.
These functions are part of the chain, since taking an odd $j$ (analogous for an even $j$), we get 
\[
\begin{split}
df_{4_j}(t) = &\dfrac{\partial f_{4_j}(t)}{\partial t_j}d t_j + \dfrac{\partial f_{4_j}(t)}{\partial t_{j+1}} d t_{j+1}
\\
=&-b^+(t_j)f_{3_j}(t)d t_j + b^+(t_{j+1})f_{3_{j+1}}(t) d t_{j+1}.
\end{split}
\]
Recall (see Lemma~\ref{lemma:0}) that $b^+$ 
can be written in terms of previous functions
as a polynomial of degree $3m\leq 3N$.

In conclusion, we need a Pfaffian chain of length $4 \cdot 2m = 8m$, with degree $\alpha = 3N+1$.

Let us now use the formula provided by Khovanskiĭ for a system of equations of Pfaffian functions (see Theorem~\ref{Khov}),
\begin{equation*}
        \mathcal{M}(n,r,\alpha,\beta_1,\ldots,\beta_n) := 2^{r(r-1)/2}\beta_1\dots \beta_n (\min \{n,r\}\alpha + \beta_1 + \dots + \beta_n - n + 1)^r,
\end{equation*}
to obtain an upper bound of the number of solutions of \eqref{mainsystem}.

Therefore, when \eqref{mainsystem} involves $n=2m$ variables, we need a chain of length $r= 8m$, the degree of which is equal to $\alpha = 3N+1$, and each equation has degree $\beta_i=1, \, $ for $i=1,\ldots,2m$ with respect to the chain. Therefore, the number of non-degenerate solutions of system \eqref{mainsystem} in the region $(-\pi,\pi)^{2m}$ is
\[
2^{\frac{8m(8m-1)}{2}}\,(2m\cdot(3N+1)+1)^{8m}.
\]

Now, since solutions of system \eqref{mainsystem} have $(0,2\pi)^{2m}$ as its domain, by a translation of the region $(-\pi,\pi)^{2m}$ we are able to cover the original domain. Last, we also add the limit cycles that have constant sign (see Proposition \ref{prop:1}), and we obtain the final bound:
\[
2^{4m(8m-1)}\,(6Nm+2m+1)^{8m} + 2.
\]
\end{proof}

\begin{remark}\label{remark1}
Khovanskiĭ's theorem provides an upper bound on the number of \emph{non-degenerate} real solutions of system \eqref{mainsystem}. However, this raw bound may substantially overcount the solutions that actually correspond to distinct limit cycles for the following reasons:
\begin{itemize}
    \item \textbf{Ordering of zeros.} All zeros of a limit cycle $t_1,\ldots,t_{2m}$ verify that $t_i \leq t_j$ with $i<j$. Hence, all solutions $(t_1,\ldots,t_{2m})$ of~\eqref{mainsystem} where some $t_i > t_j$ with $i<j$ do not correspond to a limit cycle.
    \item \textbf{Uniqueness constraints.} If we take two different solutions of~\eqref{mainsystem} but such that one element is equal, then only one of the solutions corresponds with a limit cycle, due to the uniqueness of solutions.
    \item \textbf{Repeated zeros and degeneracies.} Limit cycles with fewer than $2m$ zeros can be repeated and counted several times. By Lemma \ref{lemma:4}, these limit cycles have some of their zeros repeated, and by permuting the position of the repeated values, one can get more solutions related to the same limit cycle.
    \item \textbf{Realization as a limit cycle.} Even if we have a solution of~\eqref{mainsystem}, \(t_1,\ldots,t_{2m}\), we still require that the corresponding solution segment starting at \((t_1,0)\) not only ends at \((t_2,0)\), but also remains positive for every \(t\in(t_1,t_2)\). The same requirement applies to the remaining segments.
\end{itemize}
\end{remark}

All the equations in \eqref{mainsystem} arise from integrals of trigonometric polynomials and exponentials of trigonometric polynomials; in particular, the equations system belong to the same low-complexity functional family. In contrast, in order to give his upper bound in his theorem, Khovanskiĭ considers a broader and potentially heterogeneous class of Pfaffian functions that can be different within the system. Restricting to a homogeneous, low-dimensional generator set of functions (the same trigonometric and exponential functions appearing in every equation) typically allows strictly smaller upper bounds than those given by the most general Khovanskiĭ estimate.

This phenomenon is illustrated  in \cite{Ojeda}, where the authors obtained a smaller upper bound for \eqref{eq:main2} with 
\[a^{\pm}(t) =\pm\left(a_0 + a_1 \cos(t) + a_2\sin(t)\right),\quad b(t) = b_0 + b_1\cos(t) + b_2\sin(t),\]
where $ a_0,a_1,a_2,b_0,b_1,b_2 \in \mathbb{R}$, by exploiting the shared trigonometric structure rather than applying a fully general Pfaffian bound.

In conclusion, the direct application of Khovanski\u{\i}'s general bound yields a robust \emph{a priori} estimate, but it is not necessarily optimal for the specific trigonometric--exponential equations arising in~\eqref{mainsystem}.

\section{Linear piecewise ODE with an arbitrary number of regions}

In this section, we consider the general piecewise-linear equation on the cylinder with trigonometric polynomial coefficients,
\begin{equation}\label{eq:mainPCW2}
x' =
\begin{cases}
a_{n+1}(t)\,x +b_{n+1}(t), & x \ge x_{n},\\
a_{i}(t)\,x +b_{i}(t), & x_{i-1}\le x \le x_{\,i},\quad i=2,\dots,n,\\
a_{1}(t)\,x +b_{1}(t), & x \le x_{1},
\end{cases}
\end{equation}
where $a_i,\,b_i$, $i=1,\ldots,n+1$, are trigonometric polynomials. Denote $M$ the maximum degree of the trigonometric polynomials
 $a_i,\,b_i$, $i=1,\ldots,n+1$.

As we mentioned in the introduction, we will only consider crossing limit cycles, so if $u$ is a limit cycle and $u(\bar t) = x_k$ for some $\bar t\in[0,2\pi]$, then 
\[\left( a_k(\bar t)x_k + b_k(\bar t) \right) \left(  a_{k+1}(\bar t)x_k + b_{k+1}(\bar t) \right) > 0.\]

\subsection{Structure of the set of crossing limit cycles}

The structure of the set of crossing solutions of~\eqref{eq:mainPCW2} is studied in the following propositions. We will bound the number of crossings, and the number of possible crossing configurations with the splitting lines $x=x_i$, $1\leq i\leq n$. Finally, we will reduce the problem of bounding the number of limit cycles with a prescribed crossing configuration to counting non-degenerate solutions of a Pfaffian system of equations.


\begin{proposition}

    
A crossing limit cycle of~\eqref{eq:mainPCW2} has at most $2Mn$ crossings.
\end{proposition}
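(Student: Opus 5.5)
The plan is to count crossings line by line. A crossing limit cycle $u$ has at most $2M$ crossings with each splitting line $x=x_k$, $k=1,\dots,n$. Summing over the $n$ lines then gives at most $2Mn$ crossings. This mirrors the two-region case, where the number of zeros of a crossing limit cycle in $(0,2\pi)$ was bounded by $2m$.

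Fix $k$ and suppose $u(\bar t)=x_k$. By the definition of crossing solution,
\[\bigl(a_k(\bar t)x_k+b_k(\bar t)\bigr)\bigl(a_{k+1}(\bar t)x_k+b_{k+1}(\bar t)\bigr)>0,\]
so $u$ passes transversally through $x=x_k$ at $\bar t$. The direction of passage is given by the common sign of $g_k(t):=a_k(t)x_k+b_k(t)$ and $h_k(t):=a_{k+1}(t)x_k+b_{k+1}(t)$. Both are trigonometric polynomials of degree at most $M$.

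Consider two consecutive crossings $\bar t<\hat t$ of $u$ with $x=x_k$ in one period. Between them, $u-x_k$ has constant sign, because crossings are transversal and hence isolated. The crossings at $\bar t$ and $\hat t$ must therefore be in opposite directions: if $u$ goes upward at $\bar t$, then $u>x_k$ on $(\bar t,\hat t)$, so it must come down at $\hat t$. Consequently $g_k$ (and likewise $h_k$) takes opposite signs at consecutive crossings, so $g_k$ has a zero in $(\bar t,\hat t)$. Working cyclically on the circle $[0,2\pi)$ with $2\pi$-periodicity, if $u$ has $c_k$ crossings with $x=x_k$ per period, then $g_k$ changes sign at least $c_k$ times on the circle.

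There is one degenerate case. If $g_k\equiv 0$, then no crossing with $x=x_k$ is possible at all, so $c_k=0$. Otherwise $g_k$ is a nonzero trigonometric polynomial of degree at most $M$ and has at most $2M$ zeros per period. This gives $c_k\le 2M$, and summing over $k$ yields at most $2Mn$ crossings.

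The main obstacle is making the alternation argument rigorous on the circle. One must check that the crossings with a fixed line are finitely many: they are transversal, hence isolated, and $[0,2\pi]$ is compact. One must also handle the wrap-around from the last crossing back to the first, where the same argument applies on $(\bar t_{\text{last}},\bar t_{\text{first}}+2\pi)$ by periodicity. That closing pair is exactly what produces $c_k$ sign changes rather than $c_k-1$, which is the right count to compare with the $2M$ zero bound.
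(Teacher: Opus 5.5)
Your proof is correct and follows essentially the same route as the paper: consecutive crossings with a fixed line $x=x_k$ occur in opposite directions. Hence the nonzero trigonometric polynomial $a_k(t)x_k+b_k(t)$, of degree at most $M$, vanishes between them, which gives at most $2M$ crossings per line and $2Mn$ in total. Your explicit treatment of the cyclic wrap-around, which yields $c_k$ zeros rather than $c_k-1$, fills in a detail the paper leaves implicit.
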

\begin{proof}



For each $i=1,\dots,n$, the function $t \rightarrow a_i(t)x_i + b_i(t)$ is a trigonometric polynomial of degree lower or equal than $M$. If one of the functions is identically zero, no crossing limit cycle crosses $x=x_i$, so we may assume none of these functions is identically zero. Then, each has at most $2M$ zeros in any period.  


Now, let $u$ be a crossing limit cycle of~\eqref{eq:mainPCW2}. The vector field must change sign between two consecutive crossings with the same line $x = x_i$. Hence, at each level $x = x_i$, $u$ can have at most $2M$ crossings, which yields a total of at most $2Mn$ crossings. 
\end{proof}

The following example shows that we can find piecewise-linear ODEs with solutions exhibiting the maximum number of crossings.

\begin{example} 
Consider~\eqref{eq:mainPCW2} with \[a_i=0,\quad  b_i(t) =-M(n+1)\sin(M\,t), \quad x_i = i,\quad i=1,\dots,n+1 .\] Then the solutions are $u(t)= (n+1)\cos(M\,t)+C$, $\,C\in\mathbb{R}$. Choosing $C=(n+1)/2$, it is easy to check that $u(t)$, $t\in[0,2\pi)$, has $2M$ simple crossings with each $x_i$, $i=1,\ldots,n$. Moreover, by transversality and continuity of the solutions, this number of crossings is preserved by small perturbations of $a_1,\ldots,a_{n+1},b_1,\dots,b_{n+1}$, so it is easy to obtain an equation with a crossing limit cycle with the maximum number of crossings.
\end{example}

Assume $u$ is a crossing limit cycle, and let $\mathcal{S}_1$ be the unit circle which we can identify to $[0,2\pi)$. We define the following maps: 
\[
T(u):=(t_1,\ldots,t_k)\in\mathcal{S}_1^k,
\]
where $t_1,\ldots,t_k\in \mathcal{S}_1$ are the ordered times when the solution crosses one of the splitting lines $\{x_1,\ldots,x_n\}$, that is, $u(t_i) = x_{j_i} \in \{x_1,\ldots,x_n\}$. 
The $k$-tuple $(t_1,\ldots,t_k)$ is understood up to cyclic permutation, that is, we consider the crossing times as an ordered $k$-tuple on $\mathcal S_1$ with no distinguished first element. The value $k \in \mathbb{N}$ is even, since the crossing limit cycles are periodic, so when they cross a splitting line $x \in \{x_1,\ldots,x_n\}$, they cross it in pairs.

Define also
\[
X(u):=(u(t_1),\ldots,u(t_k))\in \{x_1,\ldots,x_n\}^k,
\]
where $(t_1,\ldots,t_k)$ are the crossing times associated with $u$ as in the definition of $T(u)$, and the $k$-tuple $X(u)$ is understood up to cyclic permutation.

We prove that the set of initial conditions of crossing solutions with the same crossing sequence is an open set.

\begin{lemma}\label{lemma:3}
Let $u$ be a crossing limit cycle.
The set of solutions $v$ that are transversal to the splitting set $\{x_1,\dots,x_n\}$ on $[0,2\pi]$ and satisfy $X(v)=X(u)$
define an open subset of the space of initial conditions.
\end{lemma}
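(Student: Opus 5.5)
Fix $x_0 = v(0)$ for a solution $v$ as in the statement, writing $v=u(\cdot,x_0)$ for the solution with $u(0,x_0)=x_0$. The plan is to show that every nearby initial condition $y$ gives a solution $u(\cdot,y)$ that is still transversal to the splitting set and has the same crossing sequence $X$. The natural tool is continuous dependence on initial conditions piece by piece, combined with the implicit function theorem at each transversal crossing, just as in the proof of Proposition~\ref{prop:simple}. There the crossing times $t_1(x),\dots,t_k(t_{k-1})$ were obtained recursively as functions of the initial value.

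\textbf{Step 1 (crossings persist).} Let $0<t_1<\dots<t_k<2\pi$ be the crossing times of $v$, with $v(t_i)=x_{j_i}$. We may take $0$ not to be a crossing time, after a small rotation of the starting time if needed. Transversality means the two one-sided fields $a_{j}(t_i)x_{j_i}+b_{j}(t_i)$, for $j=j_i$ and $j=j_i+1$, are nonzero and have the same sign.

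On $[0,t_1]$, $v$ stays in a single region $R$, strictly between consecutive splitting lines. The linear equation of $R$ extends analytically across the boundary lines, so we consider its solution through $(0,y)$. Continuous dependence gives that this solution is uniformly close to $v$ on $[0,t_1+\delta]$.

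The crossing equation $w(t)=x_{j_1}$ has a nondegenerate root at $t_1$, since $w'(t_1)=a(t_1)x_{j_1}+b(t_1)\neq0$. The implicit function theorem therefore gives a unique nearby crossing time $t_1(y)$, continuous in $y$, with the same sign of crossing. By transversality of the field on the other side, the solution then enters the adjacent region, and we restart from $(t_1(y),x_{j_1})$. Iterating this over the finitely many crossings gives $t_i(y)$ close to $t_i$ and the same target lines $x_{j_i}$.

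\textbf{Step 2 (no new crossings).} This is the main obstacle: we must rule out extra crossings. On each compact set $[t_{i-1}+\varepsilon,\,t_i-\varepsilon]$, the distance from $v$ to the splitting lines is bounded below by some $\eta>0$. Uniform closeness then prevents $u(\cdot,y)$ from touching any line there.

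Near each $t_i$, we use that the field evaluated on $x=x_{j_i}$ is bounded away from zero in a neighbourhood. This makes $u(\cdot,y)-x_{j_i}$ strictly monotone there, so it has exactly one zero. Choosing $\varepsilon$ small first and then $|y-x_0|$ small enough gives exactly $k$ transversal crossings, hitting the lines $x_{j_1},\dots,x_{j_k}$ in order. Hence $X(u(\cdot,y))=X(v)$, and the crossings remain crossing points in the sense of the definition.

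\textbf{Step 3 (conclusion).} Steps 1 and 2 show that the set in the statement contains a neighbourhood of each of its points, so it is open. By uniqueness of crossing solutions, initial conditions correspond bijectively to solutions, so this set is open in the space of initial conditions.
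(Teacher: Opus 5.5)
Your proposal is correct and follows essentially the same route as the paper: continuous dependence on initial conditions plus transversality shows that each crossing persists on the same splitting line and in the same order, and that no extra crossings appear. You make this rigorous by working region by region with the implicit function theorem at each crossing (as in Proposition~\ref{prop:simple}) and by the compactness-plus-monotonicity argument of your Step~2, both of which the paper only asserts. The one loose point, shared with the paper, is your ``small rotation of the starting time'': it is harmless when $v$ is periodic, but not for a general non-periodic $v$ in the set, where a crossing at $t=0$ or $t=2\pi$ does change the crossing count for nearby solutions on one side, so you should instead assume (as the paper tacitly does) that all crossings of $v$ lie in the open interval $(0,2\pi)$.
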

\begin{proof}
Let $v$ be a transversal solution such that $X(v)=X(u)$. Since $v$ is transversal, all its crossings with the splitting lines are transverse.
Therefore, for each crossing time $t_j$ there exists a neighborhood in time where the
solution crosses the same splitting line with the same orientation.

By continuous dependence of solutions on initial conditions, there exists a neighborhood
of the initial condition of $v$ such that any solution $w$ starting in this neighborhood
remains close to $v$ on $[0,2\pi]$. In particular, $w$ has exactly one crossing near each
$t_j$, crosses the same splitting line $x_{i_j}$, and no additional crossings appear.

Hence, the number, order, and location of the crossings are preserved, and therefore
$X(w)=X(v)=X(u)$ for all such solutions $w$. This proves the lemma.
\end{proof}

\color{black}

We prove now that, for a fixed~\eqref{eq:mainPCW2}, the number of possible sequences $X(u)$ is finite and bounded by $2Mn$.

\begin{proposition}\label{prop:2}
For equation~\eqref{eq:mainPCW2}, there are at most $2Mn$ possible sequences $X(u)$ up to cyclic permutation, where $u$ ranges over all crossing limit cycles.
\end{proposition}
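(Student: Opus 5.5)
The plan is to use the fact that crossing limit cycles are totally ordered, and then track how the crossing pattern can change as one moves up through them. For each $i=1,\dots,n$ put $h_i(t)=a_i(t)x_i+b_i(t)$, which we may assume is not identically zero. For a crossing solution, the orientation of a crossing with $x=x_i$ at time $t$ is determined by the common sign of $h_i(t)$ and $h_{i+1}(t)$. So every up-crossing of $x_i$ lies in an arc of $\mathcal S_1$ where $h_i>0$, and every down-crossing lies in an arc where $h_i<0$. Since $h_i$ has at most $2M$ zeros, these sign arcs number at most $2M$ per level, hence at most $2Mn$ overall.

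\textbf{Step 1 (finite description of $X(u)$).} As in the proof of the previous proposition, two consecutive crossings of $u$ with the same line $x=x_i$ must have opposite orientation. Combined with the observation above, each sign arc of $h_i$ contains at most one crossing of $u$ with $x=x_i$. Hence, for each level, the pattern is encoded by the subset of sign arcs of $h_i$ that $u$ meets. The cyclic sequence $X(u)$ is then recovered from the cyclic order of these arcs together with the levels, since crossings in different arcs are ordered along $\mathcal S_1$ and arcs at different levels are handled by continuity of $u$.

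\textbf{Step 2 (ordering).} Crossing solutions enjoy uniqueness: transversality at the switching lines makes the flow well defined, as used in Lemma~\ref{lemma:3}. So two distinct crossing periodic solutions cannot intersect, and the crossing limit cycles are totally ordered, $u<v$ pointwise. For $u<v$ we then get $\{t:u(t)>x_i\}\subset\{t:v(t)>x_i\}$ for every $i$. Each arc of $\{u>x_i\}$ starts in a positive sign arc of $h_i$ and ends in a negative one. I would argue that, along the ordered family, each sign arc of $h_i$ goes through the stages ``not used'' $\to$ ``used'' $\to$ ``not used'' at most once. The reason is that the inclusion of superlevel sets forces the components of $\{v>x_i\}$ to be unions of components of $\{u>x_i\}$ plus new ones, so a crossing arc, once it stops being used, cannot be reused higher up.

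\textbf{Step 3 (counting).} With the monotonicity of Step~2, each change of $X(u)$ as we move up the ordered chain is charged to a sign arc of some $h_i$ that is vacated or entered for the last time. Counting these events against the at most $2Mn$ sign arcs bounds the number of distinct cyclic sequences by $2Mn$. Lemma~\ref{lemma:3} guarantees that each sequence is realised on an open set of initial conditions, so the sequences occupy disjoint consecutive blocks of the ordered family and are not interleaved.

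The main obstacle is Step~2. The inclusion of superlevel sets alone does not obviously forbid an arc from being used, then abandoned, then used again when two components merge. I would first try to exclude this using that between the two crossings of $v$ bounding a merged component there must be a sign change of $h_i$ of the appropriate type, so that the sign arc is ``consumed''. If this fails, I would settle for a potential function, such as the total number of vacated sign arcs, that strictly increases at every change of $X$ and is bounded by $2Mn$.
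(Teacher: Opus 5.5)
Your route is genuinely different from the paper's. The paper works with all transversal solutions on $[0,2\pi]$: each connected component of initial conditions with a fixed $X$ is an interval whose endpoints are tangent solutions, i.e.\ zeros of the lateral fields on the lines $x=x_i$ (at most $4M$ per line). It then argues that each zero bounds at most one component, which gives at most $4Mn/2=2Mn$ sequences. Working instead with the totally ordered chain of crossing limit cycles and the sign arcs of $h_i$ is a sensible alternative.

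\textbf{The gap is in Step~3.} As written, the count does not give $2Mn$. Along the chain a sign arc can be both entered and vacated, so charging each change of $X$ to ``an arc entered or vacated'' only gives at most $2\cdot 2Mn$ changes, hence at most $4Mn+1$ sequences. The missing ingredient is a pairing argument:
\begin{itemize}
\item[(a)] The crossings of $u$ with $x=x_i$ alternate in orientation, so $u$ uses as many positive as negative arcs of $h_i$.
\item[(b)] Consequently, at every change of the set of used arcs, at least two arcs of some level change state.
\item[(c)] There are at most $2N$ entry/exit events, where $N\le 2Mn$ is the number of arcs. So there are at most $N$ changes and at most $N+1$ blocks.
\item[(d)] To remove the $+1$ you need an endpoint observation. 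If the lowest (resp.\ highest) block uses any arcs, it uses at least two, and these have no entry (resp.\ exit) event, saving one change. Otherwise both extreme blocks carry the empty pattern, which is counted only once.
\end{itemize}
With this repair your argument reaches $2Mn$ using only the limit cycles, without the paper's claim that each tangency bounds at most one component.

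\textbf{Step~2 is not the real obstacle.} It is immediate and needs no discussion of merging components. Suppose $u<v$, $u$ crosses $x_i$ at $\tau\in A$, and $v$ has no crossing in $A$. Then $v\neq x_i$ on $A$, because a crossing solution meets $x_i$ only at crossings, while $v(\tau)>x_i$. Hence $v>x_i$ on $A$ and no $w>v$ uses $A$. Symmetrically, if $v$ uses $A$ and $u<v$ does not, then $u<x_i$ on $A$ and no $w<u$ uses $A$. So the cycles using a given arc form an interval of the chain.

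\textbf{Step~1 needs an actual argument rather than ``continuity''.} For $u<v$ using the same arcs, the components of $\{u>x_i\}$ and $\{v>x_i\}$ correspond bijectively with the same starting and ending arcs. The nesting of components across levels is also preserved, and this gives $X(u)=X(v)$. Note that this uses the comparability of $u$ and $v$, not the arc data alone.

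Two minor points. Lemma~\ref{lemma:3} does not imply that equal sequences occupy consecutive blocks, though you do not need this for an upper bound. Also, the second lateral field at $x_i$ is $a_{i+1}(t)x_i+b_{i+1}(t)$, not $h_{i+1}$.
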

\begin{proof}
Consider the connected component containing $u$ of the set of transversal solutions
$v$ of \eqref{eq:mainPCW2} restricted to $t\in [0,2\pi]$ and satisfying $X(v)=X(u)$.
As shown in Lemma \ref{lemma:3}, this set is open. Moreover, it can be naturally parametrized by the initial condition $v(0)$ and is therefore homeomorphic to an open interval of $\mathbb{R}$.

This connected component is bounded. Indeed, for initial conditions with sufficiently
large modulus, the corresponding solutions do not cross any of the splitting lines
$\{x_1,\ldots,x_n\}$ on $[0,2\pi]$, and hence cannot belong to the set under consideration.

Let $v_-$ and $v_+$ denote the infimum and supremum, respectively, of this connected
component in terms of the initial condition. By construction, neither $v_-$ nor $v_+$
can be transversal, since otherwise the component could be extended beyond them.
Therefore, both $v_-$ and $v_+$ must exhibit a tangency with one of the splitting lines.

Equivalently, the vector field $S(t,x)$ admits a zero at the corresponding level $x=x_i$
for some $i\in\{1,\ldots,n\}$ and at some time $t\in[0,2\pi]$. Thus, each connected component
of transversal solutions with fixed crossing sequence $X(u)$ is bounded by two zeros of
the vector field $S(t,x)$ occurring on two lines $x=x_{j_1}$, $x=x_{j_2}$.

If these zeros are simple, they cannot serve as boundary points of more than one connected
component. Indeed, at a simple zero on a splitting line $x=x_i$, one of the two lateral
vector fields changes sign at the zero, while the other does not. As a consequence, the
transversality condition can be satisfied only on one side of the zero and necessarily
fails on the other. Therefore, each zero can bound at most one connected component of
transversal solutions. Consequently, each pair of zeros gives rise to at most one connected
component of transversal solutions.

It follows that the total number of connected components of transversal solutions on
$[0,2\pi]$, and hence the total number of admissible crossing sequences $X(u)$, is bounded
by half the total number of zeros of $S(t,x)$ on the splitting lines $x=x_i$.
Since for each line $x=x_i$ there are at most $4M$ zeros, we conclude that the number of possible crossing sequences $X(u)$ is bounded by $2Mn$.
\end{proof}

Fix a $k$-tuple $(x_{i_1},\dots,x_{i_k})\in\{x_1,\ldots,x_n\}^k$ belonging to the image of the map $X$ restricted to crossing limit cycles; that is,
\[
(x_{i_1},\dots,x_{i_k}) = X(u)
\]
for some crossing limit cycle $u$.  
Observe that the sequence $X(u)$ also determines the intervals (pieces) in which the solution evolves between successive crossings. Indeed,
let $v$ be another crossing limit cycle such that $X(v)=X(u)$. By definition, there exist times
$t_1<\ldots<t_k$ satisfying
\[
v(t_j) = x_{i_j}, \qquad 1\le j\le k,
\]
and $v(t)\not\in\{x_1,\ldots,x_n\}$ 
for $t\in (t_1,t_k)$, $t\not\in \{t_1,\ldots,t_k\}$.
For each $j$, the behavior of $v(t)$ on the interval $(t_j,t_{j+1})$ is determined as follows:

\begin{enumerate}
\item If $x_{i_j}<x_{i_{j+1}}$, then
\[
x_{i_j} < v(t) < x_{i_{j+1}} \qquad \text{for all } t\in(t_{j},t_{j+1}).
\]

\item If $x_{i_j}>x_{i_{j+1}}$, then
\[
x_{i_{j+1}} < v(t) < x_{i_j} \qquad \text{for all } t\in(t_{j},t_{j+1}).
\]

\item If $x_{i_j}=x_{i_{j+1}}$, the interval containing the solution depends on the previous crossings. More precisely:
\begin{enumerate}
\item If $x_{i_{j-1}}<x_{i_j}$, then
\[
x_{i_j} < v(t) \qquad \text{for all } t\in(t_{j},t_{j+1}).
\]

\item If $x_{i_{j-1}}>x_{i_j}$, then
\[
x_{i_j} > v(t) \qquad \text{for all } t\in(t_j,t_{j+1}).
\]

\item If $x_{i_{j-1}}=x_{i_j}$, one proceeds backwards until encountering a strict inequality and applies the corresponding case above. In this situation, the solution alternates between intervals above and below $x_{i_j}$. If all crossings coincide, then their number must be even and the solution alternates above and below $x_{i_1}$.
\end{enumerate}
\end{enumerate}

In summary, once the sequence $X(u)$ is fixed, there exists a well-defined map assigning to each transition $x_{i_j}\to x_{i_{j+1}}$ an interval of $\mathbb{R}\setminus\{x_1,\ldots,x_n\}$ in which the corresponding segment of any crossing limit cycle must lie. Now, let us define a ``transition'' equation for a crossing solution of \eqref{eq:mainPCW2} that connects consecutive crossings at $(t_{i_j},x_{i_j})$ and $(t_{j+1},x_{i_{j+1}})$:
\begin{equation}\label{eq:transition2}
\begin{split}
\mathcal{Q}_j(s_1,s_2) =&
x_{i_j}\,e^{-\int_0^{s_1} a(s)\,ds}
+ \int_{s_1}^{s_2} b(t)\, e^{-\int_0^t a(s)\,ds}\,dt
\\&- x_{i_{j+1}}\,e^{-\int_0^{s_2} a(s)\,ds},
\end{split}
\end{equation}
where $a(t)$ and $b(t)$ denote the coefficients of \eqref{eq:mainPCW2} along the time interval $(t_j,t_{j+1})$, that is, the coefficients corresponding to the piece of the vector field visited by the solution between two consecutive crossing times. In particular, if $u$ is a crossing limit cycle such that $u(t_j) = x_{i_j}$, $u(t_{j+1}) = x_{i_{j+1}}$ for some $t_{j},t_{j+1}$, and $u(t)\not \in  \{x_1,\ldots,x_k\}$ for $t\in(t_{j},t_{j+1})$, then $\mathcal{Q}_j(t_{j},t_{j+1}) = 0$.
\medskip

For simplicity of notation, we write $a(t)$ and $b(t)$ for the coefficients of equation~\eqref{eq:mainPCW2} when the solution remains within a fixed region 
$(x_i,x_{i+1})$.
When the solution crosses to a different region, the corresponding coefficients $a(t)$ and $b(t)$ are understood to change accordingly.

\begin{proposition}\label{prop:5}
Let $u$ be a crossing limit cycle with
$T(u) = (t_1,\dots,t_k)$, $X(u) = (x_{i_1},\dots,x_{i_k})$. Define $Q_j$ by \eqref{eq:transition2}, and $t_{k+1} := t_1 + 2\pi$, $x_{i_{k+1}}:=x_{i_1}$.

Then, $T(u)$ is a solution of the following system of equations:
\begin{equation}\label{system2}
\begin{cases}
\mathcal{Q}_1(s_1,s_2) = 0, 
\\[0.3em]
\mathcal{Q}_2(s_2,s_3) = 0,
\\
\quad\vdots
\\
\mathcal{Q}_{k-1}(s_{k-1},s_{k}) = 0,
\\[0.3em]
\mathcal{Q}_k(s_k,s_{k+1}) = 0.
\end{cases}
\end{equation}
\end{proposition}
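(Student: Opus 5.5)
The argument mirrors Proposition~\ref{prop:1}, replacing the zero level $x=0$ by the levels $x_{i_j}$ and using variation of constants on each segment between consecutive crossings. Fix $j\in\{1,\dots,k\}$ and consider the restriction of $u$ to $[t_j,t_{j+1}]$, where $t_{k+1}=t_1+2\pi$.

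\emph{Step 1: the segment lies in a single region.} By the definition of $T(u)$, $u(t)\notin\{x_1,\ldots,x_n\}$ for $t\in(t_j,t_{j+1})$. Hence, by continuity, $u((t_j,t_{j+1}))$ is contained in one connected component of $\mathbb{R}\setminus\{x_1,\ldots,x_n\}$. This component is exactly the interval assigned to the transition $x_{i_j}\to x_{i_{j+1}}$ by the case analysis preceding the proposition. In cases (1)--(2) it is fixed by the endpoints. In case (3) it is fixed by the side from which $u$ arrived at $x_{i_j}$: since $u$ is crossing, it passes to the opposite side at $t_j$.

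On this interval, condition (1) in the definition of crossing solution gives $u'=a(t)u+b(t)$ on the open interval $(t_j,t_{j+1})$, where $a,b$ are the coefficients of that piece.

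\emph{Step 2: integrate the linear equation.} Set $A(t)=\int_0^t a(s)\,ds$. On $(t_j,t_{j+1})$ we have
\[
\frac{d}{dt}\bigl(u(t)e^{-A(t)}\bigr)=b(t)e^{-A(t)}.
\]
Since $u$ is absolutely continuous, and in fact continuous up to the endpoints, we may integrate from $t_j$ to $t_{j+1}$:
\[
u(t_{j+1})e^{-A(t_{j+1})}-u(t_j)e^{-A(t_j)}=\int_{t_j}^{t_{j+1}}b(t)e^{-A(t)}\,dt.
\]
Substituting $u(t_j)=x_{i_j}$ and $u(t_{j+1})=x_{i_{j+1}}$ gives precisely $\mathcal{Q}_j(t_j,t_{j+1})=0$.

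\emph{Step 3: the closing equation $j=k$.} Here one uses $2\pi$-periodicity. The function $u$ and the coefficients are $2\pi$-periodic, so $u(t_1+2\pi)=u(t_1)=x_{i_1}=x_{i_{k+1}}$, and the segment on $(t_k,t_1+2\pi)$ contains no further crossings. The same computation then gives $\mathcal{Q}_k(t_k,t_{k+1})=0$.

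\emph{Main obstacle.} The only delicate point is Step 1: checking that the coefficients $a,b$ in $\mathcal{Q}_j$ really are those of the region visited, and that the choice is consistent in the degenerate case $x_{i_j}=x_{i_{j+1}}$. This follows from the crossing condition (2), which forces a genuine change of side at every $t_j$, combined with the preceding description of the admissible intervals.
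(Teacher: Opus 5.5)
Your proposal is correct and follows essentially the same route as the paper. On each inter-crossing interval you integrate the linear equation of the visited piece, and the closing equation comes from $u(t_1+2\pi)=x_{i_1}$ by periodicity. The only cosmetic differences are these: you use the integrating factor $e^{-A(t)}$ where the paper writes the variation-of-constants formula and then multiplies by $e^{-\int_0^{t_{j+1}}a(s)\,ds}$, and your Step~1 (each segment stays in one region with the coefficients used in $\mathcal{Q}_j$) is covered in the paper by the case discussion before the proposition rather than inside the proof.
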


\begin{proof}
Firstly, we prove that $Q_1(t_1,t_2)=0$, being analogous for $Q_2(t_2,t_3)$, $\ldots$, $Q_{k-1}(t_{k-1},t_k)$. Since $u(t_1) = x_{i_1}$, by direct integration, one has  
\[
u(t) =  x_{i_{1}}e^{ \int_{t_1}^{t} a(s) ds} + \int_{t_1}^{t} b(s)e^{\int_{s}^t a(\tau)d\tau} ds,
\quad t\in(t_1,t_2).
\]
Now, using that $u(t_2) = x_{i_2}$, one obtains
\[
x_{i_2} =  x_{i_{1}}e^{ \int_{t_1}^{t_2} a(s) ds} + \int_{t_1}^{t_2} b(s)e^{\int_{s}^{t_2} a(\tau)d\tau} ds.
\]
Multiplying by $e^{ -\int_{0}^{t_2} a(s) ds}$,
we obtain $Q_1(t_1,t_2)=0$.

By periodicity, one has that $u(t_1+2\pi)=x_{i_1}$. Therefore, considering $t_{k+1}= t_1 + 2\pi$, $x_{i_{k+1}}=x_{i_1}$ and repeating arguments above, one has $Q_k(t_k,t_{k+1})=0$.

\end{proof}

Note that if $u,v$ are two different crossing limit cycles with $X(u)=X(v)$, then $T(u),T(v)$ are two different solutions of~\eqref{system2}. Moreover, arguing analogously to Propositions~\ref{prop:simple} and \ref{prop:simple_2zonas}, they may be assumed to be non-degenerate. 

\begin{proposition}\label{prop:6}
The number of crossing limit cycles of~\eqref{eq:main2} is bounded by the number of non-degenerate solutions of system~\eqref{system2}.    
\end{proposition}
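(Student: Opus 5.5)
The plan is to repeat, for a fixed admissible crossing sequence $X(u)=(x_{i_1},\dots,x_{i_k})$, the two-step argument of Section~3. First, every simple crossing limit cycle with this sequence gives a non-degenerate solution of~\eqref{system2}. Second, a small perturbation $x'=S(t,x)+\lambda$ makes all crossing limit cycles simple without decreasing their number. (The statement refers to~\eqref{eq:main2}, but I read it as concerning~\eqref{eq:mainPCW2} with the crossing sequence fixed.) Injectivity is immediate. Two distinct crossing limit cycles $u\neq v$ with $X(u)=X(v)$ have $T(u)\neq T(v)$: if their first crossing times coincided, uniqueness of solutions through $(t_1,x_{i_1})$ would force $u=v$. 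Proposition~\ref{prop:5} already shows that $T(u)$ solves~\eqref{system2}.

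\textbf{Step 1: simple cycles give non-degenerate solutions.} Let $u$ be simple, with crossing times $t_1<\dots<t_k$. Near $(t_j,x_{i_j})$ the equation $\mathcal{Q}_j(s_j,s_{j+1})=0$ defines $s_{j+1}$ as a function of $s_j$. This uses
\[
\partial_{s_2}\mathcal{Q}_j=\bigl(b(s_2)+a(s_2)x_{i_{j+1}}\bigr)e^{-\int_0^{s_2}a},
\]
which is nonzero exactly by the crossing condition at $t_{j+1}$; likewise
\[
\partial_{s_1}\mathcal{Q}_j=-\bigl(b(s_1)+a(s_1)x_{i_j}\bigr)e^{-\int_0^{s_1}a}.
\]
Call these $c_j e^{\cdot}$ and $-d_j e^{\cdot}$. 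The Jacobian of~\eqref{system2} is then the cyclic bidiagonal matrix of Proposition~\ref{prop:simple}. Its determinant is $\pm\prod d_j\mp\prod c_j$, where the relative sign $(-1)^{k+1}$ is handled using $k$ even.

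Next, compute the Poincaré map on the transversal section $x=x_{i_1}$ near $t_1$, namely the composition $s_1\mapsto s_2\mapsto\dots\mapsto s_{k+1}$. Its derivative is $\prod_j(-\partial_{s_j}\mathcal{Q}_j/\partial_{s_{j+1}}\mathcal{Q}_j)$, and the exponential factors telescope. The determinant therefore equals $\prod c_j\,(P'(t_1)-1)$ up to a nonzero factor. Finally, relate $P'(t_1)-1$ to $d'(u(0))$: the displacement map in the $x$-direction and the return map in the $t$-direction along the transversal line $x=x_{i_1}$ are conjugate by the local flow diffeomorphism, so one derivative equals $1$ iff the other does. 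This gives non-degeneracy.

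\textbf{Step 2: reduction to simple cycles.} Use the family $x'=S(t,x)+\lambda$, for which $d_\lambda>0$. The argument of Proposition~\ref{prop:simple_2zonas} applies verbatim and gives, for small $\lambda$ of suitable sign, at least as many crossing limit cycles, all simple by analyticity of $d$ near each cycle. For small $\lambda$ the perturbed cycles stay close to the original ones. By Lemma~\ref{lemma:3} (transversality is open), they keep the same crossing sequence $X$. The degrees and the number of regions are unchanged, so the perturbed equation is in the same class. Counting non-degenerate solutions of the perturbed system~\eqref{system2} then bounds the original count.

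\textbf{Main obstacle.} The delicate point is Step~2 when an original cycle is only crossing in the weak sense needed, and in particular preserving the crossing pattern. Each cycle is transversal by definition, so openness from Lemma~\ref{lemma:3} should suffice, but one must make sure that the perturbed cycles born near a non-simple cycle (two near an extremum of $\lambda(x)$) are distinct and both transversal. I would handle this by choosing $\lambda$ smaller than all the neighbourhood radii given by Lemma~\ref{lemma:3} for the finitely many cycles considered.
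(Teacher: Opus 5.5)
Your proposal is correct and follows essentially the same route as the paper. You show that the Jacobian of~\eqref{system2} is the cyclic bidiagonal matrix whose determinant is $\prod c_j$ times (return-map derivative minus one), so simple cycles give non-degenerate solutions, and then you use the family $x'=S(t,x)+\lambda$ to reduce to simple cycles. The differences are minor: you compute the return map on the section $x=x_{i_1}$ and conjugate it to the displacement map at $t=0$, where the paper composes the crossing-time maps directly from $t=0$ to $t=2\pi$, and you make explicit the injectivity $u\mapsto T(u)$ and the preservation of the crossing sequence under the perturbation, which the paper leaves implicit.
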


\begin{proof}
We will follow the proofs of Propositions~\ref{prop:simple} and \ref{prop:simple_2zonas} and detail just the differences. 

Firstly, as in Proposition~\ref{prop:simple}, we will prove if $u$ is a simple cycle, then $T(u)$ is a non-degenerate solution of~\eqref{system2}.

The displacement function $d$ in a neighborhood of $u(0)$ is defined as 
\[
d(x) = T(t_k(t_{k-1}(\ldots t_2(t_1(x))\ldots)))-x,
\]
where the functions $t_1,\ldots,t_k,T$ are related to the times of the solution of \eqref{eq:mainPCW2} passing by two values, and are defined implicitly as
\begin{equation}\label{eq:retorno2}
\begin{split}
    x + \int_0^{t_1} \, b(t)\, e^{ -\int_{0}^{t} a(s)\,ds}dt &=x_{i_1} \,e^{-\int_0^{t_1} a(s) \,ds}, \\
x_{i_1}\,e^{-\int_0^{t_1} a(s)\,ds}
+ \int_{t_1}^{t_2} b(t)\, e^{-\int_0^t a(s)\,ds}\,dt
&= x_{i_2}\,e^{-\int_0^{t_2} a(s)\,ds}, \\
\vdots \\
x_{i_{k-1}}\,e^{-\int_0^{t_{k-1}} a(s)\,ds}
+ \int_{t_{k-1}}^{t_k} b(t)\, e^{-\int_0^t a(s)\,ds}\,dt
&= x_{i_k}\,e^{-\int_0^{t_k} a(s)\,ds}, \\
x_{i_k}\,e^{-\int_0^{t_k} a(s)\,ds}
+ \int_{t_k}^{2\pi} b(t)\, e^{-\int_0^t a(s)\,ds}\,dt
&= T\,e^{-\int_0^{2\pi} a(s)\,ds}.
\end{split}
\end{equation}

As we did in the proof of Proposition \ref{prop:simple}, deriving with respect to $x$ in the first equation of \eqref{eq:retorno2}, one gets
\[
t_1'(x) = \frac{-1}{a(t_1)x_{i_1}+b(t_1)}
\exp\left( \int_0^{t_1(x)} a(s) \,ds\right).
\]
Considering now $t_{j+1}(t_{j})$, defined by the $j$-th equation in \eqref{eq:retorno2}, deriving with respect to $t_j$ and computing, one obtains
\[
t_{j+1}'(t_j) = \frac{
x_{i_j}a(t_j)+b(t_j)
}{x_{i_{j+1}}a(t_{j+1}) + b(t_{j+1})}\exp\left(\int_{t_j}^{t_{j+1}} a(s)\,ds\right).
\]

Finally,
\[
T'(t_k)= -\left(x_{i_k}a(t_k)
+b(t_k)\right)\exp\left(\int_{t_k}^{2\pi} a(s)\,ds\right).
\]
Replacing the previous expressions and using that, by periodicity of $u$, the index of the function $a$ coincides at the origin and at $2\pi$, then, as it is $2\pi$-periodic,
\[
\int_0^{t_1(x)} a(s) \,ds + \int_{t_k}^{2\pi} a(s)\,ds=
\int_{t_k}^{t_1(x)+2\pi} a(s) \,ds ,
\]
so one obtains
\[
\begin{split}
d'(x)
&= T'(t_k) t_k'(t_{k-1}) \ldots t_2'(t_1) t_1'(x) - 1 \\
&=
\prod_{j=1}^{k}
\frac{
x_{i_j}a(t_j)+b(t_{j})
}{
x_{i_{j+1}}a(t_{j+1})+b(t_{j+1})
}\exp\left(\int_{t_j}^{t_{j+1}} a(s)\,ds\right)
 -1,
\end{split}
\]
where $x_{i_{k+1}}=x_1$, and $t_{k+1} = t_1+2\pi$ (we have used that $a,b$ are periodic to write $a(t_{k+1})$, $b(t_{k+1})$ instead $a(t_{1})$, $b(t_{1})$).

Assume that $d'(x) \neq 0$, and let us prove that the Jacobian of~\eqref{system2} in the solution $(t_1,\ldots,t_k)$ is an invertible matrix. 
A simple computation shows that 
the Jacobian matrix is the cyclic bidiagonal matrix 
\[
J=
\begin{pmatrix}
-d_1 & c_1 & 0 & \cdots & 0 \\
0 & -d_2 & c_2 & \cdots & 0\\
\vdots && \ddots & \ddots & \vdots \\
0 & \cdots & 0 & -d_{k-1} & c_{k-1}\\
c_k & 0 & \cdots & 0 & -d_k
\end{pmatrix},
\]
where for $j=1,\dots,k-1$,
\[
\begin{split}
d_j
&= (a(t_j)x_{i_j} + b(t_j))\,e^{-\int_0^{t_j}a(s)\,ds},
\\
c_j &= (a(t_{j+1})x_{i_{j+1}} + b(t_{j+1}))\,e^{-\int_0^{t_{j+1}}a(s)\,ds},
\end{split}
\]
and, 
\[
\begin{split}
d_k &=  (a(t_k)x_{i_k} + b(t_k)) e^{-\int_0^{t_k} a(s)\,ds}, 
\\
c_k &= (a(t_{k+1})x_{i_{k+1}} + b(t_{k+1})) e^{-\int_0^{t_{k+1}}a(s)\,ds}.
\end{split}
\]

The determinant is (we recall $k$ is even)
\[
\det(J)
=
d_1 d_2 \cdots d_k
-
c_1 c_2 \cdots c_k
=c_1 c_2 \cdots c_k d'(x),
\]
so it is not null provided that $d'(x)\neq 0$ ($c_j\neq 0$ for all $j=1,\dots,k$, as we are considering crossing limit cycles).

Analogously with Proposition \ref{prop:simple_2zonas}, we may consider simple crossing limit cycles when obtaining an upper bound. In order to do so, we consider the family
\[
x' = S(t,x) + \lambda, \quad \lambda \in \mathbb{R}.
\]
As the vector field is monotonous with respect to $\lambda$, if $d(x,\lambda)$ is the displacement map in terms of $\lambda$, then $d_\lambda(x,\lambda)>0$. 

Let $u(t)$ be a crossing limit cycle for $\lambda = 0$. Then $u(0)$ is an isolated zero of the map $x \to d(x,0)$. Therefore, $u(0)$ is a local maximum, a local minimum, or a change of sign of $\lambda(x)$. By conveniently choosing $\lambda$ small enough and with the convenient sign, we get the desired result.
\end{proof}

All expressions that appear in equation \eqref{eq:transition2} and in system~\eqref{system2} can be written as combinations of Pfaffian functions involved in the proof of Lemma \ref{lemma:0}. Therefore, they belong to the Pfaffian chain, and the number of non-degenerate solutions of the system is finite and can be bounded using Khovanskiĭ’s theorem.

\subsection{Proof of the main result} Let us now restate and prove Theorem~\ref{thm:A}. Let 
\[
M = \max_{1\leq i \leq n+1} \{\deg(a_i), \deg(b_i)\}_{i=1}^{n+1},\]
and
\[
C(n,M) :=   2^{4Mn(8Mn-1)}\,(6M^2n + 2Mn+1 )^{8Mn}(2Mn) + 2.
\]

\begin{theoremA*}
Let $\mathcal{H}(n,M)$ be the supremum of the number of limit cycles of \eqref{eq:mainPCW2} with $n+1$ linear pieces in $x$ and trigonometric polynomial coefficients in $t$ of degree at most $M$. Then
\[
\mathcal{H}(n,M) \le C(n,M).
\]
\end{theoremA*}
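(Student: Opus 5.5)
The plan mirrors the proof of Theorem~\ref{thm:B}, with an extra factor coming from the finitely many crossing sequences.

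\textbf{Step 1: reductions.} Crossing limit cycles that never meet the splitting set lie in a single linear region. Arguing as in Proposition~\ref{prop:0}, they contribute at most two; this is the additive constant $+2$. Every other crossing limit cycle $u$ has a crossing sequence $X(u)$. By Proposition~\ref{prop:2} there are at most $2Mn$ such sequences up to cyclic permutation, which gives the multiplicative factor $2Mn$. By Proposition~\ref{prop:6}, applied to the perturbed family $x'=S(t,x)+\lambda$ with $\lambda$ small, we may assume every crossing limit cycle is simple. It therefore suffices to bound, for a fixed admissible sequence $(x_{i_1},\dots,x_{i_k})$, the number of non-degenerate solutions of system~\eqref{system2}.

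\textbf{Step 2: a common number of unknowns.} By the proposition bounding the number of crossings, $k\le 2Mn$. Padding as in Lemma~\ref{lemma:4} turns \eqref{system2} into a system with $2Mn$ unknowns. For the padding I would append trivial equations $s_j-s_{j'}=0$, which have degree $1$. This keeps non-degeneracy: the Jacobian acquires an identity-like block, and its determinant is still $c_1\cdots c_k\,d'(x)$ up to a nonzero factor. By a time translation we work in $(-\pi,\pi)^{2Mn}$, as at the end of the proof of Theorem~\ref{thm:B}.

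\textbf{Step 3: the Pfaffian chain.} For each unknown $s_j$ I would build the chain of Lemma~\ref{lemma:0}:
\begin{itemize}
\item $\tan(s_j/2)$ and $1/(1+\tan^2(s_j/2))$;
\item $f_{3_j}=\exp\bigl(-\int_0^{s_j}a(s)\,ds\bigr)$, where $a$ is the coefficient of the region visited on $(s_j,s_{j+1})$;
\item $f_{4_j}=\int_{s_j}^{s_{j+1}} b\,e^{-\int_0^t a}\,dt$.
\end{itemize}
This gives order $r=4\cdot 2Mn=8Mn$. Each $\mathcal{Q}_j$ involves the exponential at both endpoints, and the region on $(s_{j-1},s_j)$ can differ from the one on $(s_j,s_{j+1})$. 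I would handle this as in the two-region case: write $\mathcal{Q}_j$ through $\exp(-\int_0^{s_{j+1}}a)$ evaluated with the coefficient of region $j$. Then $df_{4_j}$ needs that same exponential at $s_{j+1}$. If this forces extra chain elements, the order grows, and I expect the stated bound absorbs this through the degree instead.

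The chain degree is $\alpha\le 3M+1$, and each equation has degree $\beta_i=1$, or slightly more. Theorem~\ref{Khov} with $n\to 2Mn$ then gives
\[2^{8Mn(8Mn-1)/2}\bigl(2Mn\,\alpha+1\bigr)^{8Mn}.\]
The stated term $6M^2n+2Mn+1=2Mn(3M+1)+1$ matches $\alpha=3M+1$ exactly.

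\textbf{Step 4: conclusion.} Multiplying by the $2Mn$ crossing sequences and adding $2$ gives $C(n,M)$.

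\textbf{Main obstacle.} The hardest part is the bookkeeping in Step 3. The chain must be arranged triangularly even though consecutive equations share variables with different region coefficients. I would first try giving each equation its own pair of exponentials at its two endpoints. If that doubles the order, I would instead evaluate $\mathcal{Q}_j$ multiplied by $e^{\int_0^{s_j}a}$, so that only the difference exponential $\exp\bigl(-\int_{s_j}^{s_{j+1}}a\bigr)$ appears. A secondary check is that the padding in Step 2 preserves non-degeneracy.
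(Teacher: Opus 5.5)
Your overall argument is the paper's: the additive $2$, the factor $2Mn$ from Proposition~\ref{prop:2}, simplicity via Proposition~\ref{prop:6}, a chain of order $4k\le 8Mn$ with $\alpha=3M+1$ and $\beta_j=1$, and Theorem~\ref{Khov}. The padding in Step~2 is harmless but unnecessary. The paper simply substitutes $k=2Mn$, which is legitimate because $2^{2k(4k-1)}\bigl(k(3M+1)+1\bigr)^{4k}$ is increasing in $k$.

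\textbf{Step 3: use your second option.} The first option does not close. The regions on the two sides of a crossing are always different. When their $a$-coefficients differ, each crossing time needs two distinct exponentials, so the order becomes $5k$. Since $r$ enters Theorem~\ref{Khov} through $2^{r(r-1)/2}(\cdots)^r$, order cannot be traded for degree, and you would get a bound larger than $C(n,M)$.

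The normalization does work. Let $a^{(j)},b^{(j)}$ be the coefficients of the region visited on $(s_j,s_{j+1})$, and put
\[
h_j=e^{-\int_{s_j}^{s_{j+1}}a^{(j)}},\qquad g_j=\int_{s_j}^{s_{j+1}}b^{(j)}(t)\,e^{-\int_{s_j}^{t}a^{(j)}}\,dt .
\]
Then $e^{\int_0^{s_j}a^{(j)}}\mathcal{Q}_j=x_{i_j}+g_j-x_{i_{j+1}}h_j$, and
\[
dh_j=h_j\bigl(a^{(j)}(s_j)\,ds_j-a^{(j)}(s_{j+1})\,ds_{j+1}\bigr),\qquad dg_j=\bigl(a^{(j)}(s_j)g_j-b^{(j)}(s_j)\bigr)ds_j+b^{(j)}(s_{j+1})h_j\,ds_{j+1}.
\]
Order the chain as follows: the $2k$ functions $\tan(s_j/2)$ and $1/(1+\tan^2(s_j/2))$, then the $h_j$, then the $g_j$. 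This chain is triangular, of order $4k$ and degree $3M+1$; for $j=k$ use $a^{(k)}(s_1+2\pi)=a^{(k)}(s_1)$. Every equation has degree $1$. Non-degeneracy is preserved, because at a common zero the Jacobian is multiplied by the invertible diagonal matrix with entries $e^{\int_0^{s_j}a^{(j)}}$.

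This is actually more careful than the paper. The paper uses a single $f_{3_j}$ per crossing time, and its displayed $df_{4_j}$ does not account for the region-$j$ exponential evaluated at $t_{j+1}$. That exponential is not in the chain when the region changes at $t_{j+1}$, which happens at every crossing. Your normalization is what makes order $4k$ legitimate.

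\textbf{Step 1: the additive constant.} The argument of Proposition~\ref{prop:0} gives at most one non-crossing limit cycle per region, so up to $n+1$ of them, not two. For instance, the constant solution $x\equiv c$ of $x'=-x+c$ is a limit cycle lying entirely inside a bounded strip $(x_{i-1},x_i)$ when $c\in(x_{i-1},x_i)$. The paper's $+2$ has the same omission, since it only adds the cycles lying outside $(x_1,x_n)$. As written, your constant is therefore unjustified for $n\ge 2$. You either need a separate argument absorbing these cycles into the main term, or the honest additive term is $n+1$.
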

\begin{proof}[Proof of Theorem~\ref{thm:A}]
By Proposition~\ref{prop:2}, there are at most $2Mn$ possible sequences $X(u)$ up to cyclic permutation, where $u$ ranges over all crossing limit cycles. Let us fix one of them. For any crossing limit cycle $u$ with the prescribed sequence $X(u)$,  Proposition~\ref{prop:5} states that $T(u)$ is a solution of~\eqref{system2}. Moreover, by Proposition~\ref{prop:6} we may assume $T(u)$ is a non-degenerate. Then the number of crossing limit cycles with the prescribed $X(u)$ is bounded by the number of non-degenerated solutions of~\eqref{system2}.


From Proposition \ref{lemma:0}, we know that all the equations in the system are Pfaffian. Let us define a Pfaffian chain for system \eqref{system2} based on the proof of Lemma \ref{lemma:0}.

Let $ t = (t_1,\dots,t_{k})$ and define the functions  \[f_{1_j}(t) = \tan\left(\frac{t_j}{2}\right), \qquad f_{2_j}(t) = \frac{1}{1+f_{1_j}^2(t)},\]
where $j = 1,\dots,k$. These functions are part of a Pfaffian chain, since 
\[
\begin{split}
    d f_{1_j}(t) &= \dfrac{1 + f_{1_j}^2(t)}{2}\,dt_j, \\
     d f_{2_j}(t) &= -f_{1_j}(t)f_{2_j}(t)\,dt_j,
\end{split}
\]
that is, their derivatives can be expressed as quadratic polynomials in terms of the function and previous functions of the chain.
Next, we need to define different exponential functions
\[ f_{3_j}(t) = \exp \left( - \int_0^{t_j} a(s)\, ds \right), \]
where $a(s)$ denotes the corresponding coefficient $a_i(t)$ of \eqref{eq:mainPCW2} when the solution remains within a fixed region $[0,2\pi]\times(x_i,x_{i+1})$. As it was proved earlier, these functions are part of the chain, since
\[
d f_{3_j}(t) = -f_{3_j}(t)a(t_j)\,dt_j, 
\]
\[
a(t_j)=
a_0 + \sum_{l=1}^N a_k Q_l(f_{1_j}(t))f_{2_j}^l(t) + \sum_{l=1}^N \tilde{a}_l P_l(f_{1_j}(t))f_{2_j}^l(t),
\]
where $a_i,\tilde a_i$ are the coefficients of $a$ as a trigonometric polynomial (see Lemma~\ref{lemma:0} for more details).
Note that $df_{3_j}$ has degree $3N+1$ as a polynomial on the functions of the chain for $j=1,\dots,k$.
Last, we need $2m$ functions 
\[ 
f_{4_j}(t) = \int_{t_j}^{t_{j+1}}b(t)e^{- \int_0^ta(s)ds}\, dt,
\]
where $j=1,\dots,k$, being $t_{k+1} = t_1 + 2\pi$ and $b(t)$ denotes the corresponding coefficient $b_i(t)$ of \eqref{eq:mainPCW2} when the solution remains within a fixed region $[0,2\pi]\times(x_i,x_{i+1})$..
These functions are part of the chain, we get 
\[
df_{4_j}(t) = \dfrac{\partial f_{4_j}(t)}{\partial t_j}d t_j + \dfrac{\partial f_{4_j}(t)}{\partial t_{j+1}} d t_{j+1}
= -b(t_j)f_{3_j}(t_j)d t_j   -b(t_j)f_{3_j}(t_j) d t_{j+1}.
\]
Recall (see Lemma~\ref{lemma:0}) that $b$ 
can be written in terms of previous functions
as a polynomial of degree $3m\leq 3N$.
In conclusion, we need a Pfaffian chain of length $ r =4 \cdot k \leq  4 \cdot 2Mn = 8Mn$.
\medskip

If we substitute in the formula given by Khovanskiĭ (see Theorem \ref{Khov}),
\begin{equation*}
        \mathcal{M}(k,r,\alpha,\beta_1,\ldots,\beta_k) := 2^{r(r-1)/2}\beta_1\dots \beta_n (\min \{k,r\}\alpha + \beta_1 + \dots + \beta_k - k + 1)^r,
\end{equation*}
for $\beta_i = 1$, for $i=1,\ldots,k$, $\alpha = 3\max\{\deg(a_i),\, \deg(b_i) \}_{i=1}^{n+1} + 1 = 3M + 1$ and $k = 2Mn$, we get that the number of non-degenerated solutions of system \eqref{system2} in the set $(-\pi, \pi)^k$ is
\[
2^{\frac{8Mn(8Mn-1)}{2}}\,(2Mn(3M + 1)+1 )^{8Mn}.
\]
This value is the upper bound of the number of limit cycles with the prescribed $X(u)$. Since there are $2Mn$ possible $X(u)$, we need to multiply by this number.

In addition, since solutions of equation \eqref{eq:mainPCW2} have $(0,2\pi)^k$ as their domain, by a translation of the region $(-\pi,\pi)^k$ we can cover the original domain. Last, we also add the limit cycles with constant sign that lie outside the set $[0,2\pi]\times (x_1,x_n)$, and so we obtain the final bound:
    \[ \mathcal{H}(n,M) \le
2Mn \left( 2^{4Mn(8Mn-1)}\,(6M^2n + 2Mn+1 )^{8Mn}\right) + 2.
\] 
\end{proof}

\begin{remark}
As described in remark \ref{remark1}, Khovanskiĭ's Theorem provides an upper bound on the number of non-degenerate real solutions of system \eqref{system2}, which do not necessarily relate to a limit cycle. Therefore, the actual upper bound for the number of limit cycles of \eqref{eq:mainPCW2} is much smaller.


Furthermore, in Section 3 we consider the same system of equations \eqref{Ec3} for all crossing limit cycles with the maximum number of crossings, even though many solutions have fewer crossings. As a result,  we obtained one upper bound that overestimates the number of such cycles, but considered only one configuration of limit cycles. 

This strategy cannot be considered in Section 4, since the system of equations \eqref{system2} changes whenever the sequence $X(u)$ changes, due to the presence of multiple regions. Therefore, the bound for a sequence $X(u)$ must be multiplied by $2Mn$ (the maximum number of crossings of a limit cycle) in order to bound the number of crossing limit cycles for all possible sequences $X(u)$.

As a consequence, the bound obtained in Section 4 for the same conditions as in Section 3 (piecewise-linear ODE with two regions) is bigger:
\[
\begin{split}
\mathcal{H}(1,M,M) &\le 
2^{4M(8M-1)}\,(6M^2+2M+1)^{8M} + 2 
    \\ &< C(1,M) = 2^{4M(8M-1)}\,(6M^2 + 2M+1 )^{8M}(2M) + 2.
\end{split}
\]
\end{remark}

\subsection{Continuous vector field}

We complete this section proving that the upper bound also applies when the vector field $S$ is continuous. Note that limit cycles with a tangency at $x=0$ may exist when $S$ is continuous, which were not considered in the upper bound. We prove that by perturbing $S$, we obtain an equation with at least the same number of limit cycles, and such that all of which have transversal crossings. In consequence, the upper bound obtained in Theorem \ref{thm:A} is also an upper bound on the number of limit cycles of~\eqref{eq:main2} with $S$ continuous.

\begin{proposition}
If $S$ is continuous, then the number of limit cycles of~\eqref{eq:main2} is lower or equal than
$\mathcal{H}(n,M)$.
\end{proposition}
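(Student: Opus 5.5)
The plan is to argue by perturbation. Suppose the continuous equation has $N$ limit cycles $u_1,\dots,u_N$, and let $N_0\le N$ be any finite number of them. We will build an equation of the same form (same $n$, same degree bound $M$) that has at least $N_0$ crossing limit cycles. Theorem~\ref{thm:A} then gives $N_0\le \mathcal{H}(n,M)$. Since $N_0$ is arbitrary, this also shows $N$ is finite and bounded by $\mathcal{H}(n,M)$.

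\textbf{Step 1: reduce to simple cycles.} Since $S$ is continuous, it is Lipschitz in $x$, and solutions are unique. So the displacement map $d(x)$ is defined and continuous on all of $\mathbb{R}$; it is analytic away from initial conditions whose orbit is tangent to a splitting line. First use the family $x'=S(t,x)+\lambda$, exactly as in Propositions~\ref{prop:simple_2zonas} and~\ref{prop:6}. Because $d_\lambda>0$, each isolated zero of $d(\cdot,0)$ persists, for $\lambda$ small and of a suitable sign, as at least one zero of $d(\cdot,\lambda)$ at which $d(\cdot,\lambda)$ changes sign. Here we only need a topological sign change, not analyticity. The perturbed equation is still continuous, still has $n+1$ pieces, and still has degree at most $M$, since adding a constant to each $b_i$ does not raise degrees.

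\textbf{Step 2: remove tangencies.} This is the main obstacle. A periodic orbit $u$ may touch a line $x=x_i$ at a time $\bar t$ where $a_i(\bar t)x_i+b_i(\bar t)=0$, and such an orbit is not a crossing solution. The tangency times are zeros of the $n$ nonzero trigonometric polynomials $g_i(t)=a_i(t)x_i+b_i(t)$, so there are finitely many of them. The plan is to move the splitting levels slightly, replacing $x_i$ by $x_i+\varepsilon_i$. For this to stay within the same family, the modified field must remain of the same form. We do this by redefining the pieces so that the field is unchanged: keep the same linear pieces and just relabel where the switch occurs. Continuity makes this a genuinely new piecewise field only up to an $O(\varepsilon)$ discontinuity. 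That is acceptable, because Theorem~\ref{thm:A} does not assume continuity.

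For each $i$, the set of levels $c$ such that some $u_j$ is tangent to $x=c$ at a zero of $a_i(t)c+b_i(t)$ is contained in $\{u_j(t): \partial_t(\cdots)=0\}$. By Sard's theorem applied to the analytic pieces, this set has measure zero in $c$. So for generic small $\varepsilon_i$, the orbits obtained after Step 1 meet the shifted lines transversally, which means they are crossing solutions.

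\textbf{Step 3: persistence.} The perturbation of Step 2 is $C^0$-small in the vector field. A zero of $d$ with a sign change cannot disappear under a $C^0$-small perturbation of $d$. Continuous dependence on the field, uniform on compact sets, gives exactly such a small perturbation of $d$. So each cycle $u_j$, $j\le N_0$, yields a nearby periodic crossing solution. Choose disjoint neighbourhoods of the initial values $u_j(0)$ and small enough $\varepsilon_i,\lambda$. Each neighbourhood then contains a zero of the new displacement map with a sign change. Each such zero is either isolated, giving a crossing limit cycle, or lies in a continuum of zeros. In the continuum case, one further generic $\lambda$-shift, via the monotonicity $d_\lambda>0$, makes it isolated. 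Counting gives at least $N_0$ crossing limit cycles, and Theorem~\ref{thm:A} gives the bound.

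The delicate points are two. First, the generic-level argument in Step 2 must be made rigorous; if the perturbed-piece device is not acceptable, the fallback is to perturb the $b_i$ by small trigonometric polynomials of degree at most $M$. Second, we must check that the perturbed orbits do not acquire new tangencies, which follows from the openness given in Lemma~\ref{lemma:3}.
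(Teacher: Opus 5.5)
There is a genuine gap in Steps 2--3. Nothing you wrote shows that the periodic orbits of the \emph{shifted} field are crossing solutions, and the persistence principle you invoke is not available for that field.

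Your Sard argument concerns the old orbits $u_j$ of the continuous field. What must be crossing, however, are the new periodic orbits, and you do not know where they sit inside your neighbourhoods. Moreover, transversality of $u_j$ to $x=c$ only says that one lateral value is nonzero. For the shifted field the two lateral values at $c=x_i+\varepsilon_i$ are $g_i(t)+\lambda+\varepsilon_i a_i(t)$ and $g_i(t)+\lambda+\varepsilon_i a_{i+1}(t)$. These vanish at different times, so near every zero of $g_i+\lambda$ there is a window of length $O(\varepsilon_i)$ in which they have opposite signs.

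Suppose some $u_j$ is tangent to $x=x_i$; Step 1 does not prevent this. Then the orbits starting in any fixed neighbourhood of $u_j(0)$ have extremal values near the tangency time that sweep an interval around $x_i$. So for every small $\varepsilon_i$, some of them reach the new line at or inside such a window. There the new return map is not a continuous crossing map: it either jumps (escaping window) or is constant on an interval of merging sliding orbits (attracting window). Hence ``a sign-changing zero survives a $C^0$-small perturbation of $d$'' does not apply. The zero may sit on a sliding periodic orbit, which Theorem~\ref{thm:A} does not count, or the sign change may occur across a jump with no zero at all.

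Shrinking the neighbourhoods with $\varepsilon_i$ does not fix this. The field perturbation is itself of size $O(|\varepsilon_i|)$, so the level and the perturbation are coupled. Genericity of the level therefore gives no transversality margin that beats the perturbation, and you would need quantitative control of $d$ near possibly degenerate zeros. Lemma~\ref{lemma:3} gives openness in initial conditions for a \emph{fixed} equation and says nothing about perturbing the equation.

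A smaller point: with a single $\lambda$ in Step 1, not every zero survives. Non-sign-changing zeros of the unfavoured type disappear, and the count is preserved only because those of the favoured type split into two, as in Proposition~\ref{prop:simple_2zonas}. So ``each neighbourhood contains a zero'' must be replaced by this counting argument.

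The missing idea is that you never need to leave the continuous class. For continuous $S$ the two lateral values at $x=x_i$ both equal $u'(t)$, so a periodic orbit meeting every splitting line transversally is automatically a crossing solution. The paper therefore uses only the family $x'=S(t,x)+\lambda$. It argues from monotonicity in $\lambda$ that periodic orbits with a tangency occur only for finitely many values of $\lambda$. It then chooses $\lambda$ small, with the sign given by the majority rule, avoiding those values. For that $\lambda$ the number of limit cycles has not decreased, all of them are crossing limit cycles, and Theorem~\ref{thm:A} applies. Replacing your Steps 2--3 by this choice of $\lambda$ removes the need to move the lines, to use Sard, or to control persistence under discontinuous perturbations.
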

\begin{proof}
The idea of the proof is to show that we can consider only limit cycles with no tangencies. We will mimic the proof of Proposition~\ref{prop:6}. To that end,
fix a continuous $S(t,x)$ and consider the family of differential equations
\begin{equation}\label{eq:lambda}
x' = S(t,x) + \lambda .    
\end{equation}
The vector field of~\eqref{eq:lambda} is monotonous with respect to $\lambda$ (some authors call this ``a rotated family with respect to $\lambda$'' by analogy with the planar case).

Assume $u$ is a solution of~\eqref{eq:mainPCW2} tangent to a level $x=x_i\in \{x_1,\dots,x_n \}$. That is, there exists $\bar t\in[0,2\pi)$ such that $u(\bar t)= x_i$ and $u'(\bar t)=0$. In particular, $S(\bar t,x_i) = 0$. Therefore, each zero of $S(t,x)$ corresponds to a solution with at least one tangency. 

Let $\bar t\in[0,2\pi)$ be a zero of $S(t,x)$, and consider the family of solutions $u(t,\lambda)$ of~\eqref{eq:lambda} satisfying the initial condition $x(\bar t,\lambda)=0$. As the vector field is monotonous, there exists at most one value of $\lambda\in\mathbb{R}$ such that $u(t,\lambda)$ is periodic. In particular, there is a finite number of values of $\lambda$ with a limit cycle with a tangency. We shall prove that we can choose a small perturbation of $\lambda$ such that the number of limit cycles is at least increased, and therefore, as all are now crossing limit cycles, the upper bound holds also for $\lambda=0$.

Assume $\bar x$ is an isolated zero of the displacement map $x\to d(x,0)$ such that $d$ changes sign. Then $d$ changes sign for nearby values of $\lambda$ for any sufficiently small neighbourhood of $\bar x$. So for any sufficiently small value of $\lambda$, there is at least one limit cycle with initial condition close to $\bar x$.

Now, suppose that $\bar x$ is an isolated zero of $x\to d(x,0)$ such that $d$ does not change sign for $\lambda=0$; for instance, $d(x, 0 ) \geq 0$ for $x$ in a neighbourhood of $\bar x$, the inequality being strict for $\bar x \neq x$. Since the vector field is monotonous with respect to $\lambda$, then for $\lambda < 0$ sufficiently small, $d$ has at least two zeros close to $\bar x$. 

Now, by conveniently selecting $\lambda$ small enough and with the convenient sign (positive if $d$ has more minima than maxima and negative otherwise), we get the desired result.
\end{proof}

\color{black}

\end{document}